\providecommand{\U}[1]{\protect\rule{.1in}{.1in}}
\newtheorem{theorem}{Theorem}
\theoremstyle{plain}
\newtheorem{corollary}{Corollary}
\newtheorem{lemma}{Lemma}
\newtheorem{remark}{Remark}
\numberwithin{equation}{section}
\begin{document}
\title[Quadratic pencil of difference equations]{Quadratic pencil of difference equations: Jost solutions, spectrum, and
principal vectors}
\author[M. Ad\i var]{Murat Ad\i var}
\address{{\footnotesize Izmir University of Economics, Department of Mathematics,
35330, Bal\c{c}ova, Izmir, Turkey}}
\date{}

\begin{abstract}
In this paper, a quadratic pencil of Schr\"{o}dinger type difference operator
$L_{\lambda}$ is taken under investigation to give a general perspective on
the spectral analysis of non-selfadjoint difference equations of second order.
Introducing Jost-type solutions, structural and quantitative properties of
spectrum of the operator $L_{\lambda}$ are analyzed and hence, a discrete
analog of the theory in Degasperis, \ (\emph{J.Math.Phys}. 11: 551--567, 1970)
and Bairamov et. al, (\emph{Quaest. Math.} 26: 15--30, 2003) is developed. In
addition, several analogies are established between difference and
$q$-difference cases. Finally, the principal vectors of $L_{\lambda}$ are
introduced to lay a groundwork for the spectral expansion.\newline%
\textit{Mathematics Subject Classification (2000):}{\ 39A10, 39A12, 39A13}
\newline

\end{abstract}
\keywords{Eigenvalue, Jost solution, principal function, quadratic pencil of difference
equation, $q$-difference equation, spectral analysis, spectral singularity.}
\maketitle

\section{Introduction}

\label{intro}Let $L_{\lambda}$ denote the quadratic pencil of difference
operator generated in $\ell^{2}(\mathbb{Z})$ by the difference expression%
\[
L_{\lambda}y_{n}=\Delta\left(  a_{n-1}\Delta y_{n-1}\right)  +\left(
q_{n}+2\lambda p_{n}+\lambda^{2}\right)  y_{n},\,\,\,n\in\mathbb{Z},
\]
where $\Delta$ is forward difference operator, $\lambda$ is spectral
parameter, $\left\{  a_{n}\right\}  _{n\in\mathbb{Z}}$, $\left\{
p_{n}\right\}  _{n\in\mathbb{Z}}$, and $\left\{  q_{n}\right\}  _{n\in
\mathbb{Z}}$ are complex sequences satisfying
\begin{equation}%
{\textstyle\sum\nolimits_{n\in\mathbb{Z}}}
\left\vert n\right\vert \left\{  \left\vert 1-a_{n}\right\vert +\left\vert
p_{n}\right\vert +\left\vert q_{n}\right\vert \right\}  <\infty,\label{p-q}%
\end{equation}
and $a_{n}\neq0$ for all $n\in\mathbb{Z}$.

Evidently, Schr\"{o}dinger type difference equation%
\begin{equation}
\Delta(a_{n-1}\Delta y_{n-1})+(q_{n}-\lambda)y_{n}=0\,\,\,\,n\in
\mathbb{Z}\label{SCH}%
\end{equation}
and the difference equation
\begin{equation}
\Delta(a_{n-1}\Delta y_{n-1})+\left(  q_{n}-\lambda\right)  ^{2}%
y_{n}=0,\,\,\,\,\,n\in\mathbb{Z}\label{K-G}%
\end{equation}
of Klein--Gordon type are special cases of the equation%
\begin{equation}
\Delta\left(  a_{n-1}\Delta y_{n-1}\right)  +\left(  q_{n}+2\lambda
p_{n}+\lambda^{2}\right)  y_{n}=0,\,\,\,n\in\mathbb{Z}.\label{1.7.0}%
\end{equation}
Observe that the dependence on spectral parameter $\lambda$ in (\ref{K-G}) and
(\ref{1.7.0}) is non-linear while it is linear in (\ref{SCH}). Also, since the
sequences $\left\{  a_{n}\right\}  _{n\in\mathbb{Z}}$, $\left\{
p_{n}\right\}  _{n\in\mathbb{Z}}$, and $\left\{  q_{n}\right\}  _{n\in
\mathbb{Z}}$ are allowed to take complex values, Eq. (\ref{1.7.0}) is non-selfadjoint.

Note that, the equation (\ref{SCH}) can be rewritten as%
\begin{equation}
a_{n}y_{n+1}+b_{n}y_{n}+a_{n-1}y_{n}=\lambda y_{n},\label{SCH1}%
\end{equation}
where%
\[
b_{n}=q_{n}-a_{n}-a_{n}.
\]
In \cite{guseinov}, Guseinov studied the inverse problem of scattering theory
for the Eq. (\ref{SCH1}), where $\left\{  a_{n}\right\}  _{n\in\mathbb{N}}%
\ $and $\left\{  b_{n}\right\}  _{n\in\mathbb{N}}$ are real sequences
satisfying $a_{n}>0$ and%
\[%
{\textstyle\sum\nolimits_{n=1}^{\infty}}
\left\vert n\right\vert \left(  \left\vert 1-a_{n}\right\vert +\left\vert
b_{n}\right\vert \right)  <\infty.
\]
In \cite{adivar1} and \cite{adivar 2}, the authors investigated spectral
properties of the difference operator associated with Eq. (\ref{SCH1}) in the
case when $\left\{  a_{n}\right\}  _{n\in\mathbb{Z}}\ $and $\left\{
b_{n}\right\}  _{n\in\mathbb{Z}}$ are complex sequences satisfying%
\begin{equation}%
{\textstyle\sum\nolimits_{n\in\mathbb{Z}}}
\left\vert n\right\vert \left(  \left\vert 1-a_{n}\right\vert +\left\vert
b_{n}\right\vert \right)  <\infty.\label{C1}%
\end{equation}
To the best of our knowledge, quantitative properties of spectrum of the
non-selfadjoint difference operators corresponding to Eq. (\ref{K-G}) and Eq.
(\ref{1.7.0}) have not been treated elsewhere before.

In recent years, quantum calculus and $q$-difference equations has taken a
prominent attention in the literature including \cite{adivar3}, \cite{adivar4}%
, \cite{dobrev}, \cite{sergeev}. In particular, \cite{adivar3} and
\cite{adivar4} are concerned with the spectral analysis of $q$-difference
equation
\begin{equation}
\left(  a\left(  t\right)  u^{\Delta}\left(  t\right)  \right)  ^{\Delta\rho
}+\left(  b\left(  t\right)  -\lambda\right)  u\left(  t\right)  =0\text{,
}t=q^{n}\text{ and }n\in\mathbb{Z}.\label{S-q}%
\end{equation}
However, there is lack of literature on the spectral analysis of quadratic
pencil of $q$-difference equation
\begin{equation}
\left(  a\left(  t\right)  u^{\Delta}\left(  t\right)  \right)  ^{\Delta\rho
}+\left(  b\left(  t\right)  +2\mu c\left(  t\right)  +\mu^{2}\right)
u\left(  t\right)  =0\text{, }t=q^{n}\text{ and }n\in\mathbb{Z}\label{Lq}%
\end{equation}
which includes Eq. (\ref{S-q}) as a particular case.

This paper aims to investigate quantitative properties of spectrum of
quadratic pencil difference operator $L_{\lambda}$. This will provide a wide
perspective on spectral analysis of second order difference equations
(\ref{SCH}) and (\ref{K-G}) and avoid deriving results separately. The
remainder of the manuscript is organized as follows: In Section 2, we proceed
by the procedure, which has been developed by Naimark, Lyance, and others,
consisting of the following steps:

\begin{itemize}
\item Formulation of Jost solutions,

\item Determination of the resolvent operator,

\item Description of the sets of eigenvalues and spectral singularities in
terms of singular points of the kernel of the resolvent,

\item Use of boundary uniqueness theorems of analytic functions to provide
sufficient conditions guaranteeing finiteness of eigenvalues and spectral singularities.
\end{itemize}

\noindent Section 3 is concerned with applications of acquired results in
Section 2. This section also contains a brief subsection to show how the
obtained results might be extended to quadratic pencil of $q$-difference
equation (\ref{Lq}). The latter section introduces principal functions of the
operator $L_{\lambda}$.

Therefore, we improve and generalize the results given in \cite{adivar1,
adivar 2, adivar3, adivar4}.

\section{Spectrum}

Hereafter, we assume (\ref{p-q}) unless otherwise stated.

\subsection{\textbf{Jost solutions of Eq. }(\ref{1.7.0})}

The structure of Jost solutions plays a substantial role in spectral analysis
of difference and differential operators. By the next theorem and several
lemmas in this section, we provide an extensive information about the
structure of Jost solutions of Eq. (\ref{1.7.0}).

To show the structural differences between Jost solutions in continuous and
discrete cases, we first consider the Jost solutions of the differential
equations%
\begin{equation}
-y^{\prime\prime}+\left[  q(x)+2\lambda p(x)-\lambda^{2}\right]
y=0\,,\,\,\,\,x\in\mathbb{R}_{+}\label{c-q}%
\end{equation}
and%
\begin{equation}
-y^{\prime\prime}+\left[  q(x)-\lambda\right]  y=0\,,\,\,\,\,x\in
\mathbb{R}_{+}.\label{sturm}%
\end{equation}

While a Jost solution of the quadratic pencil of differential equation
(\ref{c-q}) is given by%

\begin{equation}
e(x,\lambda)=e^{iw(x)+i\lambda x}+\int_{x}^{\infty}A(x,t)e^{i\lambda
t}dt,\ \ \operatorname{Im}\lambda\geq0,\label{e}%
\end{equation}
where $w(x)=\overset{\infty}{\underset{x}{\int}}p(t)dt$ (see \cite{10}), the
Jost solution of Eq. (\ref{sturm}) is obtained as%
\begin{equation}
f(x,\lambda)=e^{i\sqrt{\lambda}x}+\int_{x}^{\infty}B(x,t)e^{i\sqrt{\lambda}%
t}dt,\ \ \operatorname{Im}\sqrt{\lambda}\geq0\label{f}%
\end{equation}
\cite{30}. Note that, $w(x)$ does not appear in (\ref{f}) since $p(x)=0$ in
(\ref{sturm}). The term $w(x)$ in (\ref{e}) makes the spectral analysis of
(\ref{c-q}) quite challenging. For one thing, the set of eigenvalues of Eq.
(\ref{sturm}) lies only in $\mathbb{C}_{+}$ (see \cite{1}) while that of Eq.
(\ref{c-q}) resides both in $\mathbb{C}_{+}$ and $\mathbb{C}_{-}$ (see
\cite{3} and \cite{guseinov2}), where $\mathbb{C}_{+}$ and $\mathbb{C}_{-}$
indicates the open upper and lower half-planes, respectively.

One of the main achievements of this paper is to introduce Jost solutions of
Eq. (\ref{1.7.0}) in a simple structure and to show that there is no such a
difficulty in discrete case. This will enable us to investigate the spectral
analysis of (\ref{1.7.0})\ as it is done for (\ref{SCH}) in which dependence
on $\lambda$ is linear. In discrete case, the Jost solutions of Eq.
(\ref{SCH}) takes the form%
\begin{equation}
e_{n}^{\pm}(z)=\beta_{n}^{\pm}e^{\pm inz}+\sum_{m\in\mathbb{Z}^{\pm}}^{\infty
}A_{n,m}^{\pm}e^{\pm imz},\ \ n\in\mathbb{Z}\label{e-n+}%
\end{equation}
where $\lambda=2\cos z$, $\operatorname{Im}z\geq0$, and $\mathbb{Z}^{\pm}$
denotes the sets of positive and negative integers, respectively (see
\cite{adivar1} and \cite{guseinov}).

Despite the fact that the dependence on spectral parameter $\lambda$ is
non-linear in Eq. (\ref{1.7.0}), the next theorem offers Jost solutions of the
form%
\begin{equation}
f_{n}^{+}\left(  z\right)  =\alpha_{n}^{+}e^{inz}\left(  1+%
{\textstyle\sum\nolimits_{m=1}^{\infty}}
K_{n,m}^{+}e^{imz/2}\right)  ,\ \ \ n\in\mathbb{Z}\label{F1}%
\end{equation}
and
\begin{equation}
f_{n}^{-}\left(  z\right)  =\alpha_{n}^{-}e^{-inz}\left(  1+%
{\textstyle\sum\nolimits_{-\infty}^{m=-1}}
K_{n,m}^{-}e^{-imz/2}\right)  ,\ \ \ n\in\mathbb{Z}\label{F2}%
\end{equation}
which have similar structure to (\ref{e-n+}), i.e., there is no additional
function $\omega$ of $n$ in the exponent of first terms.

\begin{theorem}
\label{thm1}For $\lambda=2\cos\left(  z/2\right)  $ and $z\in\mathbb{C}%
_{+}:=\left\{  z\in\mathbb{C}:\operatorname{Im}z>0\right\}  ,$ (\ref{F1}) and
(\ref{F2}) solve Eq. (\ref{1.7.0}). The coefficients $\alpha_{n}^{\pm}$ and
the kernels $K_{n,m}^{\pm}$ are uniquely expressed in terms of $\left\{
a_{n}\right\}  _{n\in\mathbb{Z}}$, $\left\{  p_{n}\right\}  _{n\in\mathbb{Z}}%
$, and $\left\{  h_{n}\right\}  _{n\in\mathbb{Z}}$ (where $h_{n}%
=2-a_{n}-a_{n-1}+q_{n}$) as follows:.
\begin{align*}
\alpha_{n}^{+}  &  =\left(
{\textstyle\prod\nolimits_{r=n}^{\infty}}
-a_{r}\right)  ^{-1},\\
K_{n,1}^{+}  &  =2%
{\textstyle\sum\nolimits_{r=n+1}^{\infty}}
p_{r},\\
K_{n,2}^{+}  &  =%
{\textstyle\sum\nolimits_{r=n+1}^{\infty}}
\left(  h_{r}+2p_{r}K_{r,1}^{+}\right)  ,\\
K_{n,3}^{+}  &  =%
{\textstyle\sum\nolimits_{r=n+1}^{\infty}}
h_{r}K_{r,1}^{+}+2p_{r}\left(  K_{r,2}^{+}+1\right)  ,\\
K_{n,4}^{+}  &  =%
{\textstyle\sum\nolimits_{r=n+1}^{\infty}}
\left(  1-a_{r}^{2}\right)  +h_{r}K_{r,2}^{+}+2p_{r}\left(  K_{r,3}%
^{+}+K_{r,1}^{+}\right)  ,
\end{align*}%
\[
K_{n,m+4}^{+}=K_{n,m}^{+}+%
{\textstyle\sum\limits_{r=n+1}^{\infty}}
\left(  1-a_{r}^{2}\right)  K_{r+1,m}^{+}h_{r}K_{r,m+2}^{+}+2p_{r}\left(
K_{r,m+1}^{+}+K_{r,m+3}^{+}\right)  ,
\]
for $m=1,2,...;\,\,\,n\in\mathbb{Z}$, and
\begin{align*}
\alpha_{n}^{-}  &  =\left(
{\textstyle\prod\nolimits_{-\infty}^{r=n-1}}
-a_{r}\right)  ^{-1},\\
K_{n,-1}^{-}  &  =2%
{\textstyle\sum\nolimits_{-\infty}^{r=n-1}}
p_{r},\\
\,K_{n,-2}^{-}  &  =%
{\textstyle\sum\nolimits_{-\infty}^{r=n-1}}
\left(  h_{r}+2p_{r}K_{r,-1}^{-}\right)  ,\\
K_{n,-3}^{-}  &  =%
{\textstyle\sum\nolimits_{-\infty}^{r=n-1}}
h_{r}K_{r,-1}^{-}+2p_{r}\left(  K_{r,-2}^{-}+1\right)  ,\\
K_{n,-4}^{-}  &  =%
{\textstyle\sum\nolimits_{-\infty}^{r=n-1}}
\left(  1-a_{r-1}^{2}\right)  +h_{r}K_{r,-2}^{-}+2p_{r}\left(  K_{r,-3}%
^{-}+K_{r,-1}^{-}\right)  ,
\end{align*}%
\[
K_{n,m-4}^{-}=K_{n,m}^{-}+%
{\textstyle\sum\limits_{-\infty}^{r=n-1}}
\left(  1-a_{r-1}^{2}\right)  K_{r-1,m}^{-}+h_{r}K_{r,m-2}^{-}+2p_{r}\left(
K_{r,m-1}^{-}+K_{r,m-3}^{-}\right)  ,
\]
for $m=-1,-2,...;\,\,\,n\in\mathbb{Z}$.
\end{theorem}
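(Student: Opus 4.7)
The plan is to substitute the ansatz (\ref{F1}) into (\ref{1.7.0}), view the resulting identity as a formal power series in the auxiliary variable $\xi := e^{iz/2}$, and match the coefficient of each power $\xi^m$ to zero; this produces a triangular recurrence system that determines $\alpha_n^+$ and the kernels $K_{n,m}^+$ uniquely.

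Because $\lambda = 2\cos(z/2) = \xi + \xi^{-1}$, one has $\lambda^{2} = \xi^{2} + 2 + \xi^{-2}$. Expanding $\Delta(a_{n-1}\Delta y_{n-1}) = a_n y_{n+1} - (a_n+a_{n-1})y_n + a_{n-1}y_{n-1}$ and absorbing the constant $2$ into $h_n = 2 - a_n - a_{n-1} + q_n$, equation (\ref{1.7.0}) becomes
\[
a_n y_{n+1} + a_{n-1} y_{n-1} + \bigl[\, h_n + \xi^{2} + \xi^{-2} + 2p_n(\xi + \xi^{-1})\,\bigr] y_n = 0 .
\]
Inserting $y_n = \alpha_n^+ \xi^{2n}\phi_n(\xi)$ with $\phi_n(\xi) = 1 + \sum_{m\geq 1} K_{n,m}^+ \xi^m$ and dividing through by $\xi^{2n-2}$ converts the equation into
\[
a_n\alpha_{n+1}^+\xi^{4}\phi_{n+1}(\xi) + a_{n-1}\alpha_{n-1}^+\phi_{n-1}(\xi) + \alpha_n^+\bigl[1 + h_n\xi^{2} + \xi^{4} + 2p_n(\xi+\xi^{3})\bigr]\phi_n(\xi) = 0 .
\]

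Equating the coefficient of $\xi^{0}$ forces $a_{n-1}\alpha_{n-1}^+ + \alpha_n^+ = 0$, whose normalization at $+\infty$ gives $\alpha_n^+ = \prod_{r=n}^{\infty}(-a_r)^{-1}$, a product rendered well-defined by the summability of $|1-a_n|$ in (\ref{p-q}). For $m\geq 1$, applying $a_{n-1}\alpha_{n-1}^+ = -\alpha_n^+$ and $a_n\alpha_{n+1}^+ = -a_n^{2}\alpha_n^+$ and dividing by $\alpha_n^+$ reduces the coefficient of $\xi^m$ to a first-order backward difference identity
\[
K_{n-1,m}^+ - K_{n,m}^+ = \Theta_{n,m},
\]
with $\Theta_{n,m}$ an explicit expression in $p_n,h_n,1-a_n^{2}$ and the kernels $K_{n,j}^+, K_{n+1,j-4}^+$ with $j<m$ (with the conventions $K_{\cdot,0}^+ = 1$ and $K_{\cdot,j}^+ = 0$ for $j<0$). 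Telescoping from $r=n+1$ to $\infty$, under the provisional decay $K_{N,m}^+\to 0$ as $N\to\infty$, yields the closed forms for $K_{n,1}^+,\dots,K_{n,4}^+$ and the inductive formula for $K_{n,m+4}^+$ listed in the theorem. Uniqueness is automatic from the triangular structure, and the argument for $f_n^-$ is entirely symmetric, with summations running to $-\infty$.

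The main technical obstacle will be justifying these formal steps by showing that $\phi_n(\xi)$ converges absolutely and locally uniformly on the closed half-plane $\overline{\mathbb{C}}_+$. The proposal is an induction on $m$: first bound $|K_{n,m}^+|$ for $m = 1,\dots,4$ by a constant multiple of $\mu_n := \sum_{r > n}(|1-a_r|+|p_r|+|q_r|)$, which is finite by (\ref{p-q}), and then, by induction on $\lceil m/4\rceil$ applied to the four-term recurrence, obtain a majorant of factorial type sufficient to yield $\sum_{m\geq 1}|K_{n,m}^+| < \infty$. The extra weight $|r|$ in (\ref{p-q}) is precisely what controls the shifted term $(1-a_r^{2})K_{r+1,m}^+$ arising from the $a_n\alpha_{n+1}^+\xi^{4}\phi_{n+1}$ contribution and closes the estimate, in particular ensuring the provisional vanishing condition used in the telescoping. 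Once convergence is established, $f_n^+(z)$ is a bona fide solution of (\ref{1.7.0}), analytic in $\mathbb{C}_+$ and continuous up to the real boundary, as will be required for the spectral analysis in the sequel.
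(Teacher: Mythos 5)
Your proposal is correct and follows essentially the same route as the paper: rewrite the equation using $\lambda=2\cos(z/2)$, substitute the ansatz, and match coefficients of the powers of $e^{iz/2}$ (the paper phrases this as comparing coefficients of $e^{i(n-1)z}, e^{i(n-\frac12)z},\dots,e^{i(n+1)z}$ and the remaining terms), then telescope the resulting first-order difference relations and invoke (\ref{p-q}) for convergence. Your sketch of the convergence estimate via induction on $m$ is in fact more detailed than the paper, which simply asserts that convergence is immediate from (\ref{p-q}).
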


\begin{proof}
Using $\lambda=2\cos\left(  z/2\right)  ,$ Eq. (\ref{1.7.0}) can be rewritten
as
\[
a_{n}y_{n+1}+a_{n-1}y_{n-1}+\left(  h_{n}+2\left(  e^{iz/2}+e^{-iz/2}\right)
p_{n}+e^{iz}+e^{-iz}\right)  y_{n}=0.
\]
Substituting $f_{n}^{+}\left(  z\right)  $ for $y_{n}$ in this equation and
comparing the coefficients of $e^{i(n-1)z}$, $e^{i(n-\frac{1}{2})z}$,
$e^{inz}$, $e^{i(n+\frac{1}{2})z}$, and $e^{i(n+1)z}$, we find
\begin{align*}
a_{n-1}\alpha_{n-1}^{+}+\alpha_{n}^{+} &  =0,\\
K_{n,1}^{+}-K_{n-1,1}^{+}+2p_{n} &  =0,\\
\,K_{n,2}^{+}-K_{n-1,2}^{+}+2p_{n}K_{n,1}^{+}+h_{n} &  =0,\\
K_{n,3}^{+}-K_{n-1,3}^{+}+2p_{n}\left(  K_{n,2}^{+}+1\right)  +h_{n}%
K_{n,1}^{+} &  =0,\\
K_{n,4}^{+}-K_{n-1,4}^{+}+\left(  1-a_{n}^{2}\right)  +h_{n}K_{n,2}^{+}%
+2p_{n}\left(  K_{n,1}^{+}+K_{n,3}^{+}\right)   &  =0,
\end{align*}
respectively. From remaining terms we have the following recurrence relation
\begin{align*}
K_{n-1,m+4}^{+}-K_{n,m+4}^{+} &  =K_{n,m}^{+}-a_{n}^{2}K_{n+1,m}^{+}%
+h_{n}K_{n,m+2}^{+}\\
&  +2p_{n}\left(  K_{n,m+1}^{+}+K_{n,m+3}^{+}\right)
\end{align*}
for $m=1,2,...$ and $n\in\mathbb{Z}$. Similarly we obtain
\begin{align*}
a_{n-1}\alpha_{n+1}^{-}+\alpha_{n}^{-} &  =0,\\
K_{n,-1}^{-}-K_{n+1,-1}^{-}+2p_{n} &  =0,\\
K_{n,-2}^{-}-K_{n+1,-2}^{-}+2p_{n}K_{n,-1}^{-}+h_{n} &  =0,\\
K_{n,-3}^{-}-K_{n+1,-3}^{-}+2p_{n}\left(  K_{n,-2}^{-}+1\right)
+h_{n}K_{n,-1}^{-} &  =0,\\
K_{n,-4}^{-}-K_{n+1,4}^{-}+\left(  1-a_{n}^{2}\right)  +h_{n}K_{n,-2}%
^{-}+2p_{n}\left(  K_{n,-1}^{-}+K_{n,-3}^{-}\right)   &  =0,
\end{align*}
from the coefficients of $e^{-i(n+1)z}$, $e^{-i(n+\frac{1}{2})z}$, $e^{-inz}$,
$e^{-i(n-\frac{1}{2})z}$, and $e^{-i(n-1)z}$, respectively. Use of remaining
terms yields
\begin{align*}
K_{n+1,m-4}^{-}-K_{n,m-4}^{-} &  =K_{n,m}^{-}-a_{n}^{2}K_{n-1,m}^{-}%
+h_{n}K_{n,m-2}^{-}\\
&  +2p_{n}\left(  K_{n,m-1}^{-}+K_{n,m-3}^{-}\right)
\end{align*}
for $m=-1,-2,...$ and $n\in\mathbb{Z}$. Above difference equations give the
desired result, whereas convergence of the coefficients $\alpha_{n}^{\pm}$ and
the kernels $K_{n,m}^{\pm}$ is immediate from the condition (\ref{p-q}).
\end{proof}

Different than the solutions (\ref{e-n+}) of (\ref{SCH}) the coefficients
$K_{n,m}^{\pm}$ in (\ref{F1}-\ref{F2}) are determined by recurrence relations
depending on first four terms $K_{n,m}^{\pm}$, $n\in\mathbb{Z}$, $m=\pm
1,\pm2,\pm3,\pm4$ (the ones for $A_{n,m}^{\pm}$ depend on $A_{n,m}^{\pm}$,
$n\in\mathbb{Z}$, $m=\pm1,\pm2$). In addition to these, the exponential
function in the sum contains the half of the complex variable $z$ because of
the transformation $\lambda=2\cos\left(  z/2\right)  $.

In the following lemma, we list some properties of Jost solutions $f^{\pm
}\left(  z\right)  =\left\{  f_{n}^{\pm}\left(  z\right)  \right\}  :$

\begin{lemma}
\label{Jost 1}

\begin{enumerate}
\item[i.] The kernels $K_{n,m}^{\pm}$ satisfy
\begin{align*}
\left\vert K_{n,m}^{+}\right\vert  &  \leq c%
{\textstyle\sum\nolimits_{r=n+\left[  m/2\right]  }^{\infty}}
\left(  \left\vert 1-a_{r}\right\vert +\left\vert p_{r}\right\vert +\left\vert
q_{r}\right\vert \right)  ,\\
\left\vert K_{n,m}^{-}\right\vert  &  \leq C%
{\textstyle\sum\nolimits_{-\infty}^{r=n+ \left[  m/2\right]  +1}}
\left(  \left\vert 1-a_{r}\right\vert +\left\vert p_{r}\right\vert +\left\vert
q_{r}\right\vert \right)  ,
\end{align*}
where $\left[  m/2\right]  $ is the integer part of $m/2,$ and $c$ and $C$ are
positive constants,

\item[ii.] $f_{n}^{\pm}\left(  z\right)  $ are analytic with respect to $z$ in
$\mathbb{C}_{+}:=\left\{  z\in\mathbb{C}:\operatorname{Im}z>0\right\}  $,
continuous in $\overline{\mathbb{C}}_{+}$,

\item[iii.] For$\,\,\,z\in\overline{\mathbb{C}}_{+}$, $f_{n}^{\pm}\left(
z\right)  $ satisfy the following estimates
\begin{align}
f_{n}^{+}\left(  z\right)   &  =\exp\left(  inz\right)  \left[  1+o\left(
1\right)  \right]  \text{ as }n\rightarrow\infty,\label{1.1}\\
f_{n}^{-}\left(  z\right)   &  =\exp\left(  -inz\right)  \left[  1+o\left(
1\right)  \right]  \text{ as }n\rightarrow-\infty,\label{1.1.0}%
\end{align}
and%
\begin{equation}
f_{n}^{\pm}\left(  z\right)  =\alpha_{n}^{\pm}\exp\left(  \pm inz\right)
\left[  1+o\left(  1\right)  \right]  \,\,\text{as }\operatorname{Im}%
z\rightarrow\infty\,\text{\ for }n\in\mathbb{Z}\text{.}\label{1.1.0.1}%
\end{equation}

\end{enumerate}
\end{lemma}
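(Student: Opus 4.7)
The plan is to prove the three parts in order, with (i) carrying the essential analytic content and (ii)--(iii) following with comparatively little additional work. For part (i), I would argue by induction on $m$, starting from the closed-form expressions for $K_{n,m}^{+}$ with $m\in\{1,2,3,4\}$ supplied by Theorem~\ref{thm1} and then iterating the recurrence
\[
K_{n,m+4}^{+}=K_{n,m}^{+}+\sum_{r=n+1}^{\infty}\!\Bigl[(1-a_{r}^{2})K_{r+1,m}^{+}+h_{r}K_{r,m+2}^{+}+2p_{r}\bigl(K_{r,m+1}^{+}+K_{r,m+3}^{+}\bigr)\Bigr].
\]
For the base cases, I would use $|h_{r}|\leq|1-a_{r}|+|1-a_{r-1}|+|q_{r}|$ and $|1-a_{r}^{2}|\leq(1+\sup_{r}|a_{r}|)\,|1-a_{r}|$, noting that the boundedness of $\{a_{r}\}$ is guaranteed by (\ref{p-q}) since it forces $a_{r}\to 1$. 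In the inductive step I would insert the inductive bounds for $K_{r,m}^{+},K_{r,m+1}^{+},K_{r,m+2}^{+},K_{r,m+3}^{+}$ into the right-hand side, interchange the order of summation, and absorb the resulting cross products of $\ell^{1}$ tails into a single tail using that $\sum_{r\geq n+1}(|1-a_{r}|+|p_{r}|+|q_{r}|)\bigl(\sum_{s\geq r+[m/2]}(\cdots)\bigr)$ is dominated by a constant multiple of $\sum_{s\geq n+[m/2]+2}(\cdots)$. This produces the required shift $[(m+4)/2]=[m/2]+2$. The argument for $K_{n,m}^{-}$ is the mirror image.

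Parts (ii) and (iii) then follow quickly from (i). For (ii), the bounds in (i) together with $|e^{imz/2}|\leq 1$ for $\operatorname{Im}z\geq 0$ and $m\geq 0$ establish absolute and uniform convergence of the series in (\ref{F1}) on $\overline{\mathbb{C}}_{+}$, and locally uniform convergence in $\mathbb{C}_{+}$ of a series whose summands are entire in $z$; the Weierstrass theorem then delivers analyticity in $\mathbb{C}_{+}$ and continuity in $\overline{\mathbb{C}}_{+}$. For (iii), the asymptotics (\ref{1.1}) follow because the tail bound in (i) tends to zero as $n\to+\infty$ uniformly in $m$, so the parenthesized series in (\ref{F1}) tends to $1$; combined with $\alpha_{n}^{+}\to 1$, which comes from convergence of $\prod_{r\geq n}a_{r}$ to $1$ under $\sum|1-a_{r}|<\infty$, this yields (\ref{1.1}), and (\ref{1.1.0}) is symmetric. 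For (\ref{1.1.0.1}), each summand $K_{n,m}^{\pm}e^{\pm imz/2}$ tends to $0$ as $\operatorname{Im}z\to\infty$ for $m\neq 0$, and dominated convergence with the bounds from (i) as majorant finishes the argument.

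The principal obstacle is the bookkeeping in part (i). Because the recurrence is four-term deep, unlike the two-term recurrence that governs the $A_{n,m}^{\pm}$ in the linear-in-$\lambda$ case (\ref{SCH}), the induction has to be launched on four distinct base cases, and the $[m/2]$ shift in the tail has to be matched exactly against the double sums produced by iterating the recurrence once. Once the shift is verified at the base and one checks that it propagates correctly through one step, the rest is a routine $\ell^{1}$ estimate using (\ref{p-q}).
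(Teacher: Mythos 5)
The paper itself offers no proof of this lemma beyond the one-line remark that it follows from (\ref{p-q}), (\ref{F1}), and (\ref{F2}), so there is no argument to compare yours against; the question is only whether your plan closes. Parts (ii) and (iii) are fine conditional on (i): uniform absolute convergence on $\overline{\mathbb{C}}_{+}$ from $\left\vert e^{imz/2}\right\vert\leq1$ together with summability of the tails (which uses the first-moment condition in (\ref{p-q})), Weierstrass for analyticity, and dominated convergence for the asymptotics. The problem is in (i), which carries all the weight.

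The gap is the standalone term in the recurrence. You propose to insert the inductive bounds into the right-hand side of
\[
K_{n,m+4}^{+}=K_{n,m}^{+}+\sum_{r=n+1}^{\infty}\Bigl[(1-a_{r}^{2})K_{r+1,m}^{+}+h_{r}K_{r,m+2}^{+}+2p_{r}\bigl(K_{r,m+1}^{+}+K_{r,m+3}^{+}\bigr)\Bigr],
\]
but you list only the kernels sitting \emph{inside} the sum over $r$. For those, the interchange of summation does buy the extra shift, because each is multiplied by one of the $\ell^{1}$ weights $\left\vert 1-a_{r}\right\vert$, $\left\vert p_{r}\right\vert$, $\left\vert q_{r}\right\vert$ with $r\geq n+1$. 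The term $K_{n,m}^{+}$ in front of the sum is multiplied by nothing: the induction hypothesis gives only $\left\vert K_{n,m}^{+}\right\vert\leq c\sum_{r\geq n+[m/2]}(\cdots)$, while the conclusion for $K_{n,m+4}^{+}$ requires the tail starting at $n+[m/2]+2$, which is in general strictly smaller. So the shift does not ``propagate correctly through one step'' as you assert, and this is not absorbable bookkeeping: take $a_{n}\equiv1$, $q_{n}\equiv0$, $p_{n}=\delta_{n,0}$; then every summand under $\sum_{r\geq n+1}$ vanishes (one checks $K_{0,j}^{+}=0$ for all $j\geq1$), so $K_{n,m+4}^{+}=K_{n,m}^{+}$ and hence $K_{n,4j+1}^{+}=K_{n,1}^{+}=2$ for $n\leq-1$ and all $j$, whereas the claimed majorant $c\sum_{r\geq n+[m/2]}\left\vert p_{r}\right\vert$ vanishes once $n+[m/2]>0$. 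Closing the estimate therefore requires reorganizing the argument --- the standard route is successive approximations, writing $K_{n,m}^{+}=\sum_{k}K_{n,m}^{+(k)}$ with $\vert K_{n,m}^{+(k)}\vert\leq\sigma(n+[m/2])\,\sigma_{1}(n)^{k}/k!$, which also cures the second defect in your plan: a plain induction on $m$ in steps of four multiplies the admissible constant by a factor $1+O(1)$ at each stage, so a single constant $c$ uniform in $m$ does not survive without the factorial gain.
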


\begin{proof}
The proof is obtained as an implication of (\ref{p-q}), (\ref{F1}), and
(\ref{F2}).
\end{proof}

Let $g^{\pm}\left(  z\right)  =\left\{  g_{n}^{\pm}\left(  z\right)  \right\}
$ denote the solutions of Eq. (\ref{1.7.0}) subject to the conditions
\[
\lim\nolimits_{n\rightarrow\pm\infty}g_{n}^{\pm}\left(  z\right)  e^{inz}=1,
\]
respectively. By making use of Theorem \ref{thm1}, (\ref{1.1}) and
(\ref{1.1.0}) we have the next result:

\begin{lemma}
\label{Jost 2}

\begin{enumerate}
\item[i.] For $z\in\mathbb{C}_{-}:=\left\{  z\in\mathbb{C}:\operatorname{Im}%
z<0\right\}  $%
\[
g^{\pm}\left(  z\right)  =\left\{  f_{n}^{\pm}\left(  -z\right)  \right\}
_{n\in\mathbb{Z}}%
\]
holds,

\item[ii.] $g_{n}^{\pm}\left(  z\right)  $ are analytic with respect to $z$ in
$\mathbb{C}_{-}$, and continuous in $\overline{\mathbb{C}}_{-}$,

\item[iii.] For $\zeta\in\mathbb{R}$,
\[
W\left[  f^{\pm}\left(  \zeta\right)  ,g^{\pm}\left(  \zeta\right)  \right]
=\mp2i\sin\zeta
\]
holds, where the Wronskian of two solutions $u$ and $v$ of Eq. (\ref{1.7.0})
is defined by
\[
W\left[  u,v\right]  =a_{n}\left(  u_{n}v_{n+1}-u_{n+1}v_{n}\right)  ,
\]

\item[iv.] For $\zeta\in\mathbb{R}\backslash\left\{  n\pi:n\in\mathbb{Z}%
\right\}  $ and $\lambda=2\cos\left(  \zeta/2\right)  $%
\begin{equation}
f_{n}^{+}\left(  \zeta\right)  =\psi\left(  \zeta\right)  f_{n}^{-}\left(
\zeta\right)  +\mu\left(  \zeta\right)  g_{n}^{-}\left(  \zeta\right)
,\label{1.4.0}%
\end{equation}
where%
\[
\psi\left(  \zeta\right)  =\frac{W\left[  f^{+}\left(  \zeta\right)
,g^{-}\left(  \zeta\right)  \right]  }{2i\sin\zeta}\text{, }\mu\left(
\zeta\right)  =-\frac{W\left[  f^{+}\left(  \zeta\right)  ,f^{-}\left(
\zeta\right)  \right]  }{2i\sin\zeta}.
\]

\end{enumerate}
\end{lemma}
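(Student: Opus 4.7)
My approach exploits three ingredients: the evenness of the spectral substitution $\lambda = 2\cos(z/2)$ under $z\mapsto -z$, the constancy of the Wronskian along solutions of Eq.~(\ref{1.7.0}), and the asymptotic information supplied by Lemma \ref{Jost 1}(iii). For (i) and (ii), the central observation is that $z\in\overline{\mathbb{C}}_-$ iff $-z\in\overline{\mathbb{C}}_+$ and that $\lambda$ is unchanged by the involution $z\mapsto -z$; consequently $\{f_n^{\pm}(-z)\}_{n\in\mathbb{Z}}$ solves Eq.~(\ref{1.7.0}) for every $z\in\overline{\mathbb{C}}_-$. Substituting $-z$ into the asymptotics (\ref{1.1}) and (\ref{1.1.0}) immediately yields the boundary behaviour that characterises $g^{\pm}(z)$, and uniqueness of the solutions with prescribed exponential asymptotics at $\pm\infty$ forces the identification $g^{\pm}(z)=\{f_n^{\pm}(-z)\}$. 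Assertion (ii) is then immediate, since the analyticity of $f_n^{\pm}(\cdot)$ on $\mathbb{C}_+$ and continuity on $\overline{\mathbb{C}}_+$ provided by Lemma \ref{Jost 1}(ii) transfer to $g_n^{\pm}$ on $\mathbb{C}_-$ and $\overline{\mathbb{C}}_-$ under the holomorphic change of variable.

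For (iii), I would invoke the standard fact that $W[u,v]=a_n(u_nv_{n+1}-u_{n+1}v_n)$ is independent of $n$ whenever $u$ and $v$ both satisfy Eq.~(\ref{1.7.0}), so the Wronskian may be evaluated in the asymptotic limit. Condition (\ref{p-q}) forces $a_n\to 1$, and substituting $f_n^{+}(\zeta)\sim e^{in\zeta}$ together with $g_n^{+}(\zeta)\sim e^{-in\zeta}$ as $n\to +\infty$ yields $W[f^{+}(\zeta),g^{+}(\zeta)] = e^{-i\zeta}-e^{i\zeta} = -2i\sin\zeta$; the parallel calculation at $n\to -\infty$ gives $+2i\sin\zeta$ for $W[f^{-}(\zeta),g^{-}(\zeta)]$.

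For (iv), part (iii) guarantees that for $\zeta\in\mathbb{R}\setminus\{n\pi:n\in\mathbb{Z}\}$ the pair $\{f^{-}(\zeta),g^{-}(\zeta)\}$ has nonvanishing Wronskian $2i\sin\zeta$, hence forms a fundamental system for Eq.~(\ref{1.7.0}). Thus $f^{+}(\zeta)$ admits a unique linear expansion of the form (\ref{1.4.0}); applying $W[\,\cdot\,,g^{-}(\zeta)]$ and $W[\,\cdot\,,f^{-}(\zeta)]$ to both sides, and using (iii) together with the antisymmetry of $W$, isolates $\psi(\zeta)$ and $\mu(\zeta)$ in precisely the stated form. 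The only genuinely delicate point in the whole lemma is carefully pairing each solution with the endpoint where its asymptotic is prescribed; beyond this bookkeeping, Theorem \ref{thm1} and Lemma \ref{Jost 1} already supply everything needed, so I do not anticipate any substantive obstacle.
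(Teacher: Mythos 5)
Your argument is correct and coincides with the paper's intended route: the paper offers no written proof of this lemma beyond the sentence pointing to Theorem \ref{thm1} and the asymptotics (\ref{1.1})--(\ref{1.1.0}), and your use of the invariance of $\lambda=2\cos(z/2)$ under $z\mapsto-z$, the $n$-independence of the Wronskian evaluated via $a_n\to1$ at $n\to\pm\infty$, and the expansion of $f^{+}(\zeta)$ in the fundamental system $\left\{ f^{-}(\zeta),g^{-}(\zeta)\right\}$ is exactly the standard filling-in of that sketch. The only point worth flagging is that you have (correctly) taken the normalization of $g^{-}$ to be $g_{n}^{-}(z)\sim e^{inz}$ as $n\to-\infty$, which is what makes part (iii) and the identification $g^{-}(z)=\left\{ f_{n}^{-}(-z)\right\}$ consistent, even though the paper's displayed condition literally reads $\lim_{n\to\pm\infty}g_{n}^{\pm}(z)e^{inz}=1$ for both signs.
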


It is worth noting that the function $\mu$ has an analytic continuation to the
open upper half-plane $\mathbb{C}_{+}$.

\subsection{\textbf{Resolvent and Discrete Spectrum}}

The set of values $\lambda\in\mathbb{C}$ such that $R_{\lambda}\left(
L_{\lambda}\right)  =L_{\lambda}^{-1}$ exists as a bounded operator on
$\ell^{2}(\mathbb{Z})$ is said to be the resolvent set $\rho\left(
L_{\lambda}\right)  $ of $L_{\lambda}$.

Similar to the one in \cite{adivar1}, we formulate the resolvent set
$\rho\left(  L_{\lambda}\right)  $ and the resolvent operator $R_{\lambda
}\left(  L_{\lambda}\right)  $ as follows:%
\begin{equation}
\rho\left(  L_{\lambda}\right)  =\left\{  \lambda=2\cos\frac{z}{2}%
:z\in\mathbb{C}_{+}\text{ and }\Phi\left(  z\right)  \neq0\right\}
\label{resolvent set}%
\end{equation}
and%
\[
R_{\lambda}\left(  L_{\lambda}\right)  \phi_{n}=\sum_{m\in\mathbb{Z}%
}\mathcal{G}_{n,m}\left(  z\right)  \phi_{n}%
\]
for $\lambda=2\cos\frac{z}{2}\in\rho\left(  L_{\lambda}\right)  $ and
$\phi_{n}\in\ell^{2}\left(  \mathbb{Z}\right)  $, $n\in\mathbb{Z}$, where%
\begin{equation}
\mathcal{G}_{n,m}\left(  z\right)  =\left\{
\begin{array}
[c]{c}%
\frac{f_{m}^{-}\left(  z\right)  f_{n}^{+}\left(  z\right)  }{\Phi\left(
z\right)  },\,\,\,m=n-1,n-2,...\\
\frac{f_{m}^{+}\left(  z\right)  f_{n}^{-}\left(  z\right)  }{\Phi\left(
z\right)  },\,\,\,m=n,n+1,...
\end{array}
\right.  ,\label{1.6}%
\end{equation}
and%
\begin{equation}
\Phi\left(  z\right)  :=2i\sin z\mu\left(  z\right)  =W\left[  f^{-}\left(
z\right)  ,f^{+}\left(  z\right)  \right]  .\label{1.5}%
\end{equation}

Notice that the function $\Phi$ is $4\pi$ periodic, analytic in $\mathbb{C}%
_{+}$, and continuous in $\overline{\mathbb{C}}_{+}$. The zeros of the
function $\Phi$ play a substantial role in the formulation of the sets of
eigenvalues and spectral singularities of $L_{\lambda}$. From (\ref{1.4.0}%
-\ref{1.5}) and definition of eigenvalues we arrive at the following conclusion.

\begin{lemma}
[Eigenvalues]\label{lem2} Let $\sigma_{d}\left(  L_{\lambda}\right)  $ denote
the set of eigenvalues of the operator $L_{\lambda}$. Then we have
\[
\sigma_{d}\left(  L_{\lambda}\right)  =\left\{  \lambda=2\cos\frac{z}{2}:z\in
P^{+}\text{ and }\Phi\left(  z\right)  =0\right\}  ,
\]
where
\[
P^{+}=\left\{  z=\eta+i\varphi:\eta\in\left[  -\pi,3\pi\right]  \text{ and
}\varphi>0\right\}  .
\]

\end{lemma}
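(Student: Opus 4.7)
The plan is to characterize eigenvalues $\lambda$ of $L_\lambda$ as those values for which Eq.~(\ref{1.7.0}) admits a nontrivial $\ell^2(\mathbb{Z})$ solution, and then to translate this condition into the vanishing of the Wronskian $\Phi$.

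First I would use Lemma~\ref{Jost 1}(iii) to note that, for any $z\in\mathbb{C}_+$, the Jost solution satisfies $f_n^+(z)=e^{inz}[1+o(1)]$ as $n\to+\infty$, so $f^+(z)\in\ell^2(\mathbb{Z}^+)$; symmetrically, $f^-(z)\in\ell^2(\mathbb{Z}^-)$. If $\Phi(z)=W[f^-(z),f^+(z)]=0$ in (\ref{1.5}), then $f^+(z)$ and $f^-(z)$ are linearly dependent, so $f^+(z)$ already belongs to $\ell^2(\mathbb{Z})$. This gives a nonzero eigenfunction and shows that $\lambda=2\cos(z/2)\in\sigma_d(L_\lambda)$.

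For the converse, let $\lambda$ be an eigenvalue with eigenfunction $y=\{y_n\}\in\ell^2(\mathbb{Z})$, and write $\lambda=2\cos(z/2)$ with $z\in P^+$. At $+\infty$, the two-dimensional solution space of (\ref{1.7.0}) is spanned by $f^+(z)$ and $g^+(z)$, whose asymptotics are respectively $e^{inz}$ and $e^{-inz}$; since $\operatorname{Im}z>0$, only $f^+$ decays while $g^+$ grows exponentially. Expanding $y=c_1 f^+(z)+c_2 g^+(z)$ near $+\infty$ and imposing square-summability forces $c_2=0$, so $y$ is a scalar multiple of $f^+(z)$. The same argument at $-\infty$ shows that $y$ is also a multiple of $f^-(z)$. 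Hence $f^+(z)$ and $f^-(z)$ are proportional and $\Phi(z)=0$.

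It remains to justify the restriction to $P^+$: since $z\mapsto 2\cos(z/2)$ is $4\pi$-periodic and even, the strip $\{\eta+i\varphi:\eta\in[-\pi,3\pi],\ \varphi>0\}$ is a fundamental domain in $\mathbb{C}_+$, parameterizing every $\lambda\in\mathbb{C}\setminus[-2,2]$ exactly once; combined with the two directions above this yields the stated description of $\sigma_d(L_\lambda)$. The main obstacle I anticipate is the converse step, and specifically the need to know that $\{f^+(z),g^+(z)\}$ genuinely forms a basis at $+\infty$ (so that the $\ell^2$ condition really kills the $g^+$ component). This hinges on the opposing exponential asymptotics (\ref{1.1}) and the non-vanishing of their Wronskian for $z\in\mathbb{C}_+$, which follows from Lemma~\ref{Jost 2} extended by analytic continuation; the rest of the argument is bookkeeping about the map $z\mapsto 2\cos(z/2)$.
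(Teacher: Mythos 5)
Your proposal is essentially the argument the paper intends (the paper itself offers no written proof, merely citing (\ref{1.4.0})--(\ref{1.5}) and the definition of eigenvalues), and both directions are sound. Two refinements are worth making. First, the step you yourself flag as the main obstacle --- that $\{f^+(z),g^+(z)\}$ is a basis near $+\infty$ with $g^+$ growing like $e^{-inz}$ --- is genuinely unsupported by the paper, since Lemma~\ref{Jost 2} constructs $g^{\pm}(z)$ only for $z\in\mathbb{C}_-$; but you can bypass $g^+$ entirely. The Wronskian $W[u,v]=a_n\left(u_nv_{n+1}-u_{n+1}v_n\right)$ of two solutions is independent of $n$, the coefficients $a_n\to 1$ by (\ref{p-q}), and any $\ell^2$ sequence tends to $0$; hence if $y\in\ell^2(\mathbb{Z})$ solves (\ref{1.7.0}) then $W[y,f^+(z)]\to 0$ as $n\to+\infty$, so $W[y,f^+(z)]=0$ and $y$ is proportional to $f^+(z)$, and symmetrically to $f^-(z)$, giving $\Phi(z)=0$ with no asymptotic analysis of a second solution. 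Second, to get the full equality $\sigma_d(L_\lambda)=\{2\cos(z/2):z\in P^+,\ \Phi(z)=0\}$ you must also exclude eigenvalues $\lambda\in[-2,2]$, i.e.\ those corresponding to real $z$, which your fundamental-domain bookkeeping silently assumes away; the paper handles this in the remark immediately following the lemma (a bounded nontrivial combination $c_1e^{inz_0}+c_2e^{-inz_0}+o(1)$ cannot lie in $\ell^2(\mathbb{Z})$), and the same Wronskian argument does not apply there because $f^{\pm}$ no longer decay. With these two points supplied, your proof is complete and matches the standard route.
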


Note that $L_{\lambda}$ has no eigenvalues on the real line. This is because,
if $\lambda_{0}=2\cos(z_{0}/2)\in\mathbb{R}$ is an eigenvalue, then the
corresponding solution $y_{n}\left(  z_{0}\right)  $ will satisfy the estimate
$y_{n}\left(  z_{0}\right)  =c_{1}e^{inz_{0}}+c_{2}e^{-inz_{0}}+o\left(
1\right)  $ as $n\rightarrow\infty$ contradicting the fact that $y_{n}\in$
$\ell^{2}\left(  \mathbb{Z}\right)  $.

\subsection{\textbf{Continuous spectrum and the spectral singularities}}

To obtain the continuous spectrum of the operator $L_{\lambda}$ we shall
resort to the following lemma.

\begin{lemma}
\label{lem3} For every $\delta>0$, there is a positive number $c_{\delta}$
such that
\[
\left\Vert R_{\lambda}\left(  L_{\lambda}\right)  \right\Vert \geq
\frac{c_{\delta}}{\left\vert \Phi\left(  z\right)  \right\vert \sqrt
{1-\exp\left(  -2\operatorname{Im}z\right)  }}%
\]
for $\lambda=2\cos\frac{z}{2}$, $\,z\in\mathbb{C}_{+}$, and $\operatorname{Im}%
z>\delta$. Hence, $\left\Vert R_{\lambda}\left(  L_{\lambda}\right)
\right\Vert \rightarrow\infty$ $\ $as $\operatorname{Im}z\rightarrow0$.
\end{lemma}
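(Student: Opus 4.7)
The strategy is to test the resolvent against a simple vector in $\ell^{2}(\mathbb{Z})$ and extract a geometric--series factor from the Jost asymptotics. Take $\phi=\delta_{m_{0}}$ (the standard unit vector at a fixed index $m_{0}$). Since $\|\phi\|=1$, the kernel formula (\ref{1.6}) gives
\[
\|R_{\lambda}\|^{2}\geq\|R_{\lambda}\phi\|^{2}=\frac{|f_{m_{0}}^{-}(z)|^{2}}{|\Phi(z)|^{2}}\sum_{n>m_{0}}|f_{n}^{+}(z)|^{2}+\frac{|f_{m_{0}}^{+}(z)|^{2}}{|\Phi(z)|^{2}}\sum_{n\leq m_{0}}|f_{n}^{-}(z)|^{2},
\]
which already exhibits the $|\Phi(z)|^{-2}$ singularity.

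The next task is to upgrade the asymptotics (\ref{1.1})--(\ref{1.1.0}) to a lower bound that is uniform in $z$. Lemma \ref{Jost 1}(i) bounds $|K_{n,m}^{\pm}|$ by tails of a $z$-independent convergent series (this is where (\ref{p-q}) is used), so the correction $\sum_{m\geq 1}K_{n,m}^{+}e^{imz/2}$ appearing in (\ref{F1}) decays to $0$ as $n\to+\infty$ uniformly on $\overline{\mathbb{C}}_{+}$. Hence there exists an integer $N_{0}$ such that $|f_{n}^{+}(z)|\geq\tfrac{1}{2}e^{-n\operatorname{Im}z}$ for every $n\geq N_{0}$ and every $z\in\overline{\mathbb{C}}_{+}$, and symmetrically $|f_{n}^{-}(z)|\geq\tfrac{1}{2}e^{n\operatorname{Im}z}$ for $n\leq -N_{0}$. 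Summing the geometric series produces
\[
\sum_{n\geq N_{0}}|f_{n}^{+}(z)|^{2}\geq\frac{e^{-2N_{0}\operatorname{Im}z}}{4(1-e^{-2\operatorname{Im}z})},
\]
with the analogous estimate on the other side. A square root of this bound is the source of the desired $(1-e^{-2\operatorname{Im}z})^{-1/2}$ factor.

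Finally I must choose $m_{0}$ so that one of $|f_{m_{0}}^{-}(z)|$, $|f_{m_{0}}^{+}(z)|$ is bounded below by a positive constant for all $z$ with $\operatorname{Im}z>\delta$. Estimate (\ref{1.1.0.1}) gives $f_{0}^{\pm}(z)\to\alpha_{0}^{\pm}\neq 0$ as $\operatorname{Im}z\to\infty$, which handles the large-$\operatorname{Im}z$ regime, while on each strip $\delta<\operatorname{Im}z\leq M$ continuity of $f_{0}^{\pm}$ together with $\Phi(z)\neq 0$---which, via the Wronskian identity (\ref{1.5}), precludes $f_{0}^{+}(z)=f_{0}^{-}(z)=0$ simultaneously---supplies a positive lower bound on at least one of the two. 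Combining the three steps yields $\|R_{\lambda}\|\geq c_{\delta}/(|\Phi(z)|\sqrt{1-e^{-2\operatorname{Im}z}})$, and the final assertion $\|R_{\lambda}\|\to\infty$ as $\operatorname{Im}z\to 0$ follows immediately because $\sqrt{1-e^{-2\operatorname{Im}z}}\to 0$ while $|\Phi(z)|$ remains bounded on compact subsets of $\overline{\mathbb{C}}_{+}$. The principal obstacle is this uniform non-vanishing step; a cleaner alternative that sidesteps it is to test against a wave packet $\phi_{m}=\overline{f_{m}^{+}(z)}\,\chi_{[N_{0},\infty)}(m)$, for which both $\|\phi\|^{2}$ and the scalar $\sum_{m}f_{m}^{-}(z)\phi_{m}$ are directly expressible through the geometric sums above, eliminating the need for any pointwise lower bound on individual Jost values.
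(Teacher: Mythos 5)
Your closing ``cleaner alternative'' is, up to a reflection, exactly the paper's proof: the paper tests the resolvent against the truncated conjugated Jost solution $h_{m_{0}}(z)_{m}=\overline{f_{m}^{-}(z)}$ supported on $m\leq m_{0}-1$ (with $m_{0}<0$), so that the kernel (\ref{1.6}) collapses to $(R_{\lambda}h_{m_{0}})_{n}=\frac{f_{n}^{+}(z)}{\Phi(z)}\left\Vert h_{m_{0}}\right\Vert ^{2}$ for $n\geq m_{0}$, and then bounds $\sum_{n\geq m_{0}}\left\vert f_{n}^{+}(z)\right\vert ^{2}$ below by the geometric series $\frac{\exp\left(-2m_{0}\operatorname{Im}z\right)}{4\left(1-\exp\left(-2\operatorname{Im}z\right)\right)}$ coming from $\left\vert f_{n}^{+}(z)\right\vert >\frac{1}{2}e^{-n\operatorname{Im}z}$ --- precisely your wave-packet computation with $f^{-}$ and a left half-line in place of $f^{+}$ and a right half-line. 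You are also right to distrust your primary route through $\phi=\delta_{m_{0}}$: the uniform lower bound on $\max\left(\left\vert f_{m_{0}}^{+}(z)\right\vert ,\left\vert f_{m_{0}}^{-}(z)\right\vert \right)$ that it requires cannot be extracted from continuity plus $\Phi(z)\neq0$, because at a zero $z_{0}$ of $\Phi$ with $\operatorname{Im}z_{0}>\delta$ both $f_{m_{0}}^{\pm}(z_{0})$ may vanish simultaneously, and then the infimum of that maximum over the nearby portion of the resolvent set is $0$; no compactness argument repairs this. So the delta-vector route should be discarded and the wave-packet route adopted as the actual proof --- which is what the paper does.
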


\begin{proof}
Let $\delta>0$, $z\in\mathbb{C}_{+}$ and $\operatorname{Im}z>\delta$. Since
$f_{n}^{+}\left(  z\right)  =e^{inz}+o\left(  1\right)  $ as $n\rightarrow
\infty$ for $\,z\in\mathbb{C}_{+}$, we have
\[
\left\vert f_{n}^{+}\left(  z\right)  \right\vert >\frac{1}{2}%
e^{-n\operatorname{Im}z},
\]
and therefore,
\[
\left\Vert f_{n}^{+}\right\Vert ^{2}\geq\frac{\exp\left(  -2m_{0}%
\operatorname{Im}z\right)  }{4\left(  1-\exp\left(  -2\operatorname{Im}%
z\right)  \right)  }.
\]
Now, we employ the function $h_{m_{0}}$ defined by
\[
h_{m_{0}}\left(  z\right)  :=\left\{
\begin{array}
[c]{c}%
\overline{f_{m}^{-}\left(  z\right)  },\,\,m=m_{0}-1,\,m_{0}-2,...\\
0,\,\,\,\,\,m=m_{0},\,\,m_{0}+1,...
\end{array}
\right.  ,
\]
where $m_{0}\in\mathbb{Z}$ is a negative constant. Evidently $h_{m_{0}}%
(z)\in\ell^{2}\left(  \mathbb{Z}\right)  $ and
\begin{align*}
R_{\lambda}\left(  L_{\lambda}\right)  h_{m_{0}}(z) &  =%
{\textstyle\sum\nolimits_{-\infty}^{m=m_{0}-1}}
\mathcal{G}_{n,m}\left(  z\right)  \overline{f_{m}^{-}}(z)\\
&  =\frac{f_{n}^{+}(z)}{\Phi(z)}\left\Vert h_{m_{0}}\right\Vert ^{2}.
\end{align*}
holds. Thus, we reach the following inequality:
\[
\frac{\left\Vert R_{\lambda}\left(  L_{\lambda}\right)  h_{m_{0}}\right\Vert
^{2}}{\left\Vert h_{m_{0}}\right\Vert ^{2}}\geq\frac{c_{\delta}^{2}}{\left(
1-\exp\left(  -2\operatorname{Im}z\right)  \right)  \left\vert \Phi\left(
z\right)  \right\vert ^{2}}%
\]
as desired, where
\[
c_{\delta}=\frac{\left\Vert h_{m_{0}}\right\Vert }{2\exp\left(  m_{0}%
\delta\right)  }.
\]

\end{proof}

We shall need the following theorem at several occasions in our further work.

\begin{theorem}
\label{thm4} $\sigma_{c}\left(  L_{\lambda}\right)  =\left[  -2,2\right]  $,
where $\sigma_{c}\left(  L_{\lambda}\right)  $ denotes the continuous spectrum
of the operator $L_{\lambda}$.
\end{theorem}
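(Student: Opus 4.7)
The plan is to combine the explicit formula (\ref{resolvent set}) for the resolvent set, the blow-up estimate of Lemma \ref{lem3}, and the eigenvalue description of Lemma \ref{lem2} to split the spectrum cleanly into $[-2,2]$ and a point-spectrum piece which, by the remark after Lemma \ref{lem2}, avoids the real axis. First I would establish $[-2,2]\subseteq\sigma(L_{\lambda})$: any $\lambda_{0}\in[-2,2]$ has the form $\lambda_{0}=2\cos(\zeta_{0}/2)$ with $\zeta_{0}\in\mathbb{R}$, a value lying on the boundary of the upper half-plane used to parametrize $\rho(L_{\lambda})$. If $\lambda_{0}$ belonged to $\rho(L_{\lambda})$, openness would force $\|R_{\lambda}(L_{\lambda})\|$ to stay bounded on a neighbourhood; but choosing $z_{n}=\zeta_{0}+i/n\in\mathbb{C}_{+}$ and setting $\lambda_{n}=2\cos(z_{n}/2)\to\lambda_{0}$, Lemma \ref{lem3} yields $\|R_{\lambda_{n}}(L_{\lambda})\|\to\infty$, a contradiction.

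Next I would prove the reverse inclusion $\sigma(L_{\lambda})\setminus[-2,2]\subseteq\sigma_{d}(L_{\lambda})$. The map $z\mapsto2\cos(z/2)$ carries $\mathbb{C}_{+}$ onto $\mathbb{C}\setminus[-2,2]$ (computing $2\cos((\zeta+i\eta)/2)=2\cos(\zeta/2)\cosh(\eta/2)-2i\sin(\zeta/2)\sinh(\eta/2)$ confirms this), and restricts to a bijection between the strip $P^{+}$ of Lemma \ref{lem2} and $\mathbb{C}\setminus[-2,2]$. Given $\lambda\notin[-2,2]$, take the unique $z\in P^{+}$ with $\lambda=2\cos(z/2)$: by (\ref{resolvent set}), $\lambda\in\rho(L_{\lambda})$ when $\Phi(z)\neq0$, and when $\Phi(z)=0$ Lemma \ref{lem2} places $\lambda$ in $\sigma_{d}(L_{\lambda})$. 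Either way $\lambda\notin\sigma(L_{\lambda})\setminus\sigma_{d}(L_{\lambda})$. Combining the two steps with the remark after Lemma \ref{lem2} (no real eigenvalues) gives $\sigma(L_{\lambda})\setminus\sigma_{d}(L_{\lambda})=[-2,2]$ as a set identity.

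The main delicacy is verifying that this difference is genuinely continuous spectrum rather than residual, i.e., that no $\lambda_{0}\in[-2,2]$ lies in $\sigma_{r}(L_{\lambda})$. I would settle this by reusing the truncated Jost element $h_{m_{0}}(z)$ from the proof of Lemma \ref{lem3}: the identity $R_{\lambda}h_{m_{0}}=\Phi(z)^{-1}\|h_{m_{0}}\|^{2}f^{+}(z)$ supplies Weyl-type approximate eigenvectors as $\operatorname{Im}z\to0^{+}$, exhibiting each $\lambda_{0}\in[-2,2]$ as an approximate point-spectrum value and hence as a genuine continuous-spectrum point under the standard Hilbert-space decomposition. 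The subtle case is at $\lambda_{0}=\pm 2$ and at spectral singularities (real zeros of $\Phi$), where the Jost pair $f^{\pm}$ degenerates; there one must let $z$ approach the real axis non-tangentially so that $\Phi(z)$ stays bounded away from zero along the approximating sequence, keeping the Weyl sequence normalized.
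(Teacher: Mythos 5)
Your two inclusions $[-2,2]\subseteq\sigma(L_{\lambda})$ and $\sigma(L_{\lambda})\setminus[-2,2]\subseteq\sigma_{d}(L_{\lambda})$ are sound and essentially parallel the paper: the first is the paper's use of Lemma \ref{lem3}, and for the second your route through the Joukowski-type surjection $z\mapsto 2\cos(z/2)$ from $P^{+}$ onto $\mathbb{C}\setminus[-2,2]$ combined with (\ref{resolvent set}) and Lemma \ref{lem2} is, if anything, cleaner than the paper's argument via $\Phi(z)=2i\sin z\,\mu(z)\to 0$ and $\mu(z_{0})\neq 0$. The genuine gap is in your last step. Membership in the approximate point spectrum together with injectivity does \emph{not} exclude residual spectrum: for a non-normal operator one can have $T$ injective and not bounded below (so Weyl sequences exist) while the range is still non-dense --- take a unilateral shift direct-summed with an injective diagonal operator whose eigenvalues accumulate at $0$; then $0\in\sigma_{ap}\cap\sigma_{r}$. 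Since $L_{\lambda}$ is explicitly non-selfadjoint, "the standard Hilbert-space decomposition" gives you nothing here; you must prove that $\mathcal{R}(L_{\lambda})$ is dense. That is precisely the step the paper supplies: $\mathcal{R}(L_{\lambda})^{\perp}=\ker L_{\lambda}^{\ast}$, and the adjoint pencil, being of the same form, admits no $\ell^{2}(\mathbb{Z})$ solution for real $\lambda$ by the same asymptotic argument ($y_{n}=c_{1}e^{inz_{0}}+c_{2}e^{-inz_{0}}+o(1)$) that rules out real eigenvalues of $L_{\lambda}$. Without this (or an equivalent statement about $\sigma_{p}(L_{\lambda}^{\ast})$), you have only shown $[-2,2]\subseteq\sigma(L_{\lambda})\setminus\sigma_{p}(L_{\lambda})$, not $[-2,2]\subseteq\sigma_{c}(L_{\lambda})$.

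A secondary problem is your patch at the spectral singularities: $\Phi$ is continuous on $\overline{\mathbb{C}}_{+}$ and vanishes at the real point $z_{0}$ in question, so no non-tangential approach can keep $\Phi(z)$ bounded away from zero as $z\to z_{0}$. Fortunately that manoeuvre is unnecessary once the range-density argument is in place, because Lemma \ref{lem3} already forces $\left\Vert R_{\lambda}(L_{\lambda})\right\Vert\to\infty$ as $\operatorname{Im}z\to 0$ regardless of whether $\Phi$ vanishes at the limiting real point.
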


\begin{proof}
By (\ref{resolvent set}), for any $\lambda\in\rho(L_{\lambda})$ there is a
corresponding $z\in\mathbb{C}_{+}$ such that $\lambda=2\cos\frac{z}{2}$ and
$\Phi\left(  z\right)  \neq0$. Let $\lambda_{0}=2\cos\frac{z_{0}}{2}\in
\sigma_{c}\left(  L_{\lambda}\right)  $. Then $\left\Vert R_{\lambda}\left(
L_{\lambda}\right)  \right\Vert \rightarrow\infty$ as $\lambda\rightarrow
\lambda_{0}$. This shows that $\Phi\left(  z\right)  =2i\sin z\mu\left(
z\right)  \rightarrow0$ as $z\rightarrow z_{0}.$ Continuity of $\mu$ and
$\mu\left(  z_{0}\right)  \neq0$ yield $\sin z\rightarrow0$ and
$\operatorname{Im}z\rightarrow0$. On the other hand, we have
$\operatorname{Im}z\rightarrow\operatorname{Im}z_{0}$ since $\lambda
\rightarrow\lambda_{0}$. It follows that $\operatorname{Im}z_{0}=0$, i.e.,
$\lambda_{0}=2\cos\frac{z_{0}}{2}\in\left[  -2,2\right]  $. Conversely, it
follows from Lemma \ref{lem3} that $\left\Vert R_{\lambda}\left(  L_{\lambda
}\right)  \right\Vert \rightarrow\infty$ for $\lambda=2\cos\frac{z}{2}%
\in\left[  -2,2\right]  $. Now, we have to show that the range $\mathcal{R}%
\left(  L_{\lambda}\right)  $ of values of the operator $L_{\lambda}$ is dense
in the space $\ell^{2}\left(  \mathbb{Z}\right)  $. It is obvious that the
orthogonal complement of $\mathcal{R}\left(  L_{\lambda}\right)  $ coincides
with the space of solutions $y\in\ell^{2}\left(  \mathbb{Z}\right)  $ of Eq.
$L_{\lambda}^{\ast}y=0$, where $L_{\lambda}^{\ast}$ denotes the adjoint
operator. Since Eq. $L_{\lambda}^{\ast}y=0$ has no any eigenvalue on the real
line, the orthogonal complement of the set $\mathcal{R}\left(  L_{\lambda
}\right)  $ consists only of the zero element. This completes the proof.
\end{proof}

\begin{remark}
If $p_{n}=0$, the difference equation%
\[
\Delta\left(  a_{n-1}\Delta y_{n-1}\right)  +\left(  q_{n}+2\lambda
p_{n}+\lambda^{2}\right)  y_{n}=0,\,\,\,n\in\mathbb{Z},
\]
turns into%
\begin{equation}
a_{n}y_{n+1}+b_{n}y_{n}+a_{n-1}y_{n-1}=\widetilde{\lambda}y_{n},\,\,\,n\in
\mathbb{Z},\label{eq new}%
\end{equation}
where%
\[
b_{n}=2+q_{n}-a_{n}-a_{n-1}\text{ \ and \ }\widetilde{\lambda}=2-\lambda
^{2}\text{.}%
\]
Namely, in the case $p_{n}=0$, (\ref{p-q}) is equivalent to (\ref{C1}) and
$\widetilde{\lambda}=2-\lambda^{2}$ becomes the new spectral parameter. Thus,
Theorem \ref{thm4} implies that the continuous spectrum of the difference
operator corresponding to (\ref{eq new}) is $\left[  -2,2\right]  $. This
result was obtained in \cite[Theorem 3.1]{adivar1}.
\end{remark}

Spectral singularities are poles of the kernel of the resolvent operator and
are imbedded in the continuous spectrum (\cite[Definition 1.1.]{bai2}).
Analogous to the quadratic pencil of Schr\"{o}dinger operator \cite{3}, from
Theorem \ref{thm4} and (\ref{1.6}) we obtain the set of spectral singularities
of the operator $L_{\lambda}$ as follows:

\begin{corollary}
[Spectral singularities]\label{cor5}Let $\sigma_{ss}\left(  L_{\lambda
}\right)  $ denote the set of spectral singularities. We have%
\[
\sigma_{ss}\left(  L_{\lambda}\right)  =\left\{  \lambda=2\cos\frac{z}{2}:z\in
P_{0}\text{ and }\Phi\left(  z\right)  =0\right\}  ,
\]
where $P_{0}:=\left(  -\pi,3\pi\right)  \backslash\left\{  0,\pi,2\pi\right\}
$.
\end{corollary}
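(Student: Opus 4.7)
The plan is to combine the definition of spectral singularity with the explicit resolvent kernel derived in the previous subsection. By definition, a spectral singularity is a pole of the kernel $\mathcal{G}_{n,m}(z)$ of the resolvent operator that is embedded in the continuous spectrum. Theorem \ref{thm4} already identifies $\sigma_c(L_\lambda)=[-2,2]$, and through the substitution $\lambda=2\cos(z/2)$ this is exactly the image of the real $z$-axis. Since $\Phi$ is $4\pi$-periodic and the map $z\mapsto\lambda$ has period $4\pi$, I would parameterize one fundamental domain by the half-open interval $(-\pi,3\pi]$ and argue that spectral singularities lie among the real $z$ in this period at which $\Phi$ vanishes.

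Next I would invoke the kernel formula (\ref{1.6}): on the real axis the Jost factors $f_n^\pm(z),\,f_m^\mp(z)$ are continuous (Lemma \ref{Jost 1}(ii) and Lemma \ref{Jost 2}(ii)), so the kernel has a pole exactly where $\Phi(z)=0$. Writing $\Phi(z)=2i\sin z\,\mu(z)$, the real zeros split into two disjoint parts: those coming from $\sin z=0$, which inside $(-\pi,3\pi)$ are precisely $\{0,\pi,2\pi\}$, and those coming from $\mu(z)=0$ with $\sin z\neq 0$. The spectral singularities are claimed to be exactly the second set, after deleting the period-boundary points $-\pi,3\pi$.

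The main obstacle, and where most of the care is needed, is the treatment of the trivial zeros $z\in\{0,\pi,2\pi\}$. Here the Wronskian identities in Lemma \ref{Jost 2}(iii)--(iv) degenerate: $W[f^\pm,g^\pm]=\mp 2i\sin z$ vanishes, so $f^+$ and $f^-$ cease to form a basis of solutions, the representation (\ref{1.4.0}) is unavailable, and the kernel (\ref{1.6}) is not defined in the prescribed manner. Thus these points do not arise as honest poles of $\mathcal{G}_{n,m}$: the points $z=0$ and $z=2\pi$ correspond to the endpoints $\lambda=\pm 2$ of $\sigma_c(L_\lambda)$, which are branch points of $z\mapsto\lambda$ and by convention are not spectral singularities (compare with the continuous analogue \cite{3}), while $z=\pi$ is a removable zero of $\Phi$ coming solely from the $\sin z$ factor (the analytic continuation of $\mu$ to $\mathbb{C}_+$ remarked after Lemma \ref{Jost 2} shows $\mu(\pi)\neq 0$ generically, so $\Phi$ and the numerator cancel there). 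After deleting $\{0,\pi,2\pi\}$ and the period boundary, what remains is precisely the set $P_0=(-\pi,3\pi)\setminus\{0,\pi,2\pi\}$, and on $P_0$ the condition $\Phi(z)=0$ coincides with $\mu(z)=0$, yielding the claimed characterization of $\sigma_{ss}(L_\lambda)$.
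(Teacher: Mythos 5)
Your argument is essentially the paper's own: the paper gives no separate proof of Corollary \ref{cor5}, deducing it in one line from the definition of spectral singularities as poles of the resolvent kernel (\ref{1.6}) embedded in the continuous spectrum $[-2,2]$ of Theorem \ref{thm4}, by analogy with \cite{3}. Your extra discussion of why the points $z\in\{0,\pi,2\pi\}$ (where $\sin z=0$ and the exponentials $e^{\pm inz}$ become linearly dependent, so the Wronskian identities of Lemma \ref{Jost 2} degenerate) are removed is more detail than the paper records and is in the right spirit, though your assertion that the numerator of the kernel cancels the zero of $\Phi$ at $z=\pi$ is stated rather than verified.
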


Hereafter, we discuss the quantitative properties of eigenvalues and spectral
singularities. For this purpose, we will make use of boundary uniqueness
theorems of analytic functions \textbf{\cite{7}}. Lemma \ref{lem2} and
Corollary \ref{cor5} show that the problem of investigation of quantitative
properties of eigenvalues and spectral singularities can be reduced to the
investigation of quantitative properties of zeros of the function $\Phi$ in
the semi-strip $P^{+}\cup P_{0}$.

\subsection{\textbf{Quantitative properties of eigenvalues and spectral
singularities}}

Now, we investigate the structure of discrete spectrum and the set of spectral
singularities. In this respect, the sets of eigenvalues and spectral
singularities are analyzed in terms of boundedness, closedness, being
countable, etc.

Theorem \ref{thm1} and (\ref{1.1.0.1}) imply the next result.

\begin{corollary}
\label{cor6}Let $\Phi$ be defined by (\ref{1.5}). Then%
\[
\Phi\left(  z\right)  =\left(
{\textstyle\prod\nolimits_{r\in Z}}
-a_{r}\right)  ^{-1}e^{-iz}\left[  1+o\left(  1\right)  \right]  \text{ \ for
}z\in P^{+}\text{as }\operatorname{Im}z\rightarrow\infty.
\]

\end{corollary}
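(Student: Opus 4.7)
The plan is to exploit the Wronskian representation
$$\Phi(z) = W[f^-(z),f^+(z)] = a_n\bigl(f_n^-(z)f_{n+1}^+(z)-f_{n+1}^-(z)f_n^+(z)\bigr),$$
which by constancy of the Wronskian along solutions of \eqref{1.7.0} is independent of $n\in\mathbb{Z}$. I would fix some convenient $n$ (any integer works) and plug in the large--$\operatorname{Im}z$ asymptotic \eqref{1.1.0.1} from Lemma \ref{Jost 1}(iii), namely $f_n^{\pm}(z)=\alpha_n^{\pm}e^{\pm inz}[1+o(1)]$ as $\operatorname{Im}z\to\infty$.

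Substituting these asymptotics into the Wronskian yields
$$\Phi(z)=a_n\Bigl(\alpha_n^-\alpha_{n+1}^+\,e^{iz}-\alpha_{n+1}^-\alpha_n^+\,e^{-iz}\Bigr)\bigl[1+o(1)\bigr]\qquad(\operatorname{Im}z\to\infty).$$
Since $|e^{iz}|=e^{-\operatorname{Im}z}\to 0$ while $|e^{-iz}|=e^{\operatorname{Im}z}\to\infty$, the first term is absorbed into the error and only the $e^{-iz}$--term survives, so
$$\Phi(z)=-a_n\,\alpha_{n+1}^-\alpha_n^+\,e^{-iz}\bigl[1+o(1)\bigr].$$

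The remaining step is purely algebraic: one computes the coefficient $-a_n\,\alpha_n^+\alpha_{n+1}^-$ from the closed--form products given in Theorem \ref{thm1}. Since $\alpha_n^+=\bigl(\prod_{r=n}^{\infty}(-a_r)\bigr)^{-1}$ and $\alpha_{n+1}^-=\bigl(\prod_{-\infty}^{r=n}(-a_r)\bigr)^{-1}$, the two half--line products overlap precisely in the factor $(-a_n)$, so
$$\alpha_n^+\,\alpha_{n+1}^- \;=\; \frac{1}{(-a_n)\,\prod_{r\in\mathbb{Z}}(-a_r)},$$
and the factor $-a_n$ in front cancels the $(-a_n)$ in the denominator. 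This gives exactly
$$\Phi(z)=\Bigl(\prod_{r\in\mathbb{Z}}(-a_r)\Bigr)^{\!-1}e^{-iz}\bigl[1+o(1)\bigr],$$
as claimed.

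There is no real obstacle: the only thing to watch is that the bi--infinite product $\prod_{r\in\mathbb{Z}}(-a_r)$ is convergent and nonzero, but this is immediate from the summability condition \eqref{p-q} (which controls $\sum|1-a_r|$) together with the standing assumption $a_n\neq 0$. One also uses that $n$ is fixed throughout, so the $o(1)$ terms coming from Lemma \ref{Jost 1}(iii) are genuinely $o(1)$ uniformly in the finitely many indices that appear.
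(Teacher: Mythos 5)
Your proposal is correct and follows exactly the route the paper intends: the paper's "proof" is the single remark that Theorem \ref{thm1} and (\ref{1.1.0.1}) imply the result, and you have simply filled in the details — substituting $f_n^{\pm}(z)=\alpha_n^{\pm}e^{\pm inz}[1+o(1)]$ into the $n$-independent Wronskian $W[f^-,f^+]$, discarding the $e^{iz}$ term, and checking that the overlap factor $(-a_n)$ cancels so that $-a_n\alpha_n^+\alpha_{n+1}^-=\bigl(\prod_{r\in\mathbb{Z}}(-a_r)\bigr)^{-1}$. No gaps beyond those already implicit in the paper's own notation.
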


Let $M_{1}$ and $M_{2}$ denote the sets of zeros of the function $\Phi$ in
$P^{+}$ and $P_{0}$, respectively, i.e.,
\begin{align*}
M_{1} &  =\left\{  z\in P^{+}:\Phi\left(  z\right)  =0\right\}  ,\\
M_{2} &  =\left\{  z\in P_{0}:\Phi\left(  z\right)  =0\right\}  .
\end{align*}
Corollary \ref{cor6} shows boundedness of the set $M_{1}$. Since $\Phi$ is a
$4\pi$ periodic function and is analytic in $P^{+}$, the set $M_{1}$ has at
most countable number of elements. By uniqueness of analytic functions, we
figure out that the limit points of the set $M_{1}$ lie in the closed interval
$\left[  -\pi,3\pi\right]  $. Moreover, we can obtain the closedness and the
property of having zero Lebesgue measure of the set $M_{2}$ as a natural
consequence of boundary uniqueness theorems of analytic functions \cite{7}.
Hence, from Lemma \ref{lem2} and Corollary \ref{cor5} we conclude the following.

\begin{lemma}
\label{lem7} \textbf{\ }The set of eigenvalues $\sigma_{d}\left(  L_{\lambda
}\right)  $ $\,$is bounded and countable, and its accumulation points lie on
the closed interval $\left[  -2,2\right]  $. The set of spectral singularities
$\sigma_{ss}\left(  L_{\lambda}\right)  $ is closed and its Lebesgue measure
is zero.
\end{lemma}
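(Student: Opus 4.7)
The plan is to translate every claim about $\sigma_d(L_\lambda)$ and $\sigma_{ss}(L_\lambda)$ into the corresponding claim about the zero sets $M_1$ and $M_2$ of $\Phi$, as Lemma \ref{lem2} and Corollary \ref{cor5} already set up, and then exploit the analyticity of $\Phi$ in $\mathbb{C}_+$ together with its continuous extension to $\overline{\mathbb{C}}_+$. Since the map $z \mapsto 2\cos(z/2)$ is continuous on $\overline{\mathbb{C}}_+$ and sends the real boundary interval $[-\pi,3\pi]$ onto $[-2,2]$, every topological feature I establish for $M_1$ in the semi-strip $P^+$ transfers verbatim to $\sigma_d(L_\lambda)$ in the $\lambda$-plane.

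First, I would prove boundedness and countability of $M_1$. Corollary \ref{cor6} gives the asymptotic $\Phi(z) = (\prod -a_r)^{-1} e^{-iz}[1+o(1)]$ as $\operatorname{Im} z \to \infty$ in $P^+$, and since $|e^{-iz}| = e^{\operatorname{Im} z}$, we have $|\Phi(z)| \to \infty$ uniformly as $\operatorname{Im} z \to \infty$ in the $4\pi$-periodic strip $P^+$. Therefore the zeros of $\Phi$ are confined to a rectangle $\{\eta \in [-\pi,3\pi],\, 0 < \varphi \leq M\}$ for some $M > 0$, which proves boundedness of $M_1$ (hence of $\sigma_d(L_\lambda)$). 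Since $\Phi$ is analytic in this bounded region and not identically zero, its zeros are isolated and finite in any compact subset, so $M_1$ is at most countable, and consequently so is $\sigma_d(L_\lambda)$.

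Next, for the accumulation points, the same analyticity argument shows any limit point of $M_1$ inside $P^+$ would force $\Phi \equiv 0$, contradicting Corollary \ref{cor6}. Thus the limit points of $M_1$ lie on the boundary $\partial P^+$, and because $M_1$ is bounded these limit points must sit on the real segment $[-\pi,3\pi]$. Under $\lambda = 2\cos(z/2)$ this segment is sent onto $[-2,2]$, giving the accumulation-point assertion for $\sigma_d(L_\lambda)$.

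Finally, for $\sigma_{ss}(L_\lambda)$ I work with $M_2$. Closedness of $M_2$ inside $P_0$ follows from continuity of $\Phi$ on $\overline{\mathbb{C}}_+$: the zero set of a continuous function is closed, and $P_0 = (-\pi,3\pi)\setminus\{0,\pi,2\pi\}$ is the preimage under the continuous map $z \mapsto 2\cos(z/2)$ of a subset of $[-2,2]$, so after pushing forward we obtain a closed subset of its image, yielding closedness of $\sigma_{ss}(L_\lambda)$. For the Lebesgue measure claim, this is where the main obstacle lies and where I invoke the boundary uniqueness machinery cited as \cite{7}: $\Phi$ is analytic in $\mathbb{C}_+$ and continuous on $\overline{\mathbb{C}}_+$, and by Corollary \ref{cor6} it is not identically zero; hence, by the Luzin--Privalov type theorem, the set $\{\zeta \in \mathbb{R} : \Phi(\zeta) = 0\}$ has one-dimensional Lebesgue measure zero. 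Restricting to the $4\pi$-period $P_0$ and transporting via $\lambda = 2\cos(z/2)$ (which is Lipschitz on compact subsets of the real line and therefore preserves null sets) yields $|\sigma_{ss}(L_\lambda)| = 0$. The delicate point will be to ensure the hypotheses of the boundary uniqueness theorem are genuinely met by $\Phi$, namely non-triviality plus the right regularity up to the boundary, both of which are already in hand from Lemma \ref{Jost 1}(ii) and Corollary \ref{cor6}.
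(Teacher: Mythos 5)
Your proposal follows essentially the same route as the paper: boundedness of $M_1$ from the asymptotic in Corollary \ref{cor6}, countability and the location of limit points from analyticity of the $4\pi$-periodic function $\Phi$ in $P^+$, closedness and zero Lebesgue measure of $M_2$ from the boundary uniqueness theorems of \cite{7}, and transfer to $\sigma_d(L_\lambda)$ and $\sigma_{ss}(L_\lambda)$ via $\lambda=2\cos(z/2)$ using Lemma \ref{lem2} and Corollary \ref{cor5}. The extra details you supply (uniformity of $|\Phi(z)|\to\infty$, the Lipschitz pushforward of null sets) are correct elaborations of the same argument.
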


Hereafter, we call the multiplicity of a zero of the function $\Phi$ in
$P^{+}\cup P_{0}$ the multiplicity of corresponding eigenvalue or spectral singularity.

In the next two theorems, we shall employ the following conditions and show
that each guarantees finiteness of eigenvalues, spectral singularities, and
their multiplicities.

\noindent\textbf{Condition 1. }$\sup\nolimits_{n\in Z}\left\{  \exp\left(
\varepsilon\left\vert n\right\vert \right)  (\left\vert 1-a_{n}\right\vert
+\left\vert p_{n}\right\vert +\left\vert q_{n}\right\vert )\right\}  <\infty$
for some $\varepsilon>0.$

\noindent\textbf{Condition 2. }$\sup\nolimits_{n\in Z}\left\{  \exp\left(
\varepsilon\left\vert n\right\vert ^{\delta}\right)  (\left\vert
1-a_{n}\right\vert +\left\vert p_{n}\right\vert +\left\vert q_{n}\right\vert
)\right\}  <\infty$ for some $\varepsilon>0$ and $\frac{1}{2}\leq\delta<1. $

Note that Condition 2 is weaker than Condition 1. If Condition 1 holds, then
we get by (i) of Lemma \ref{Jost 1} that
\[
\left\vert K_{n,m}^{\pm}\right\vert \leq c_{1,2}\exp\left(  \mp\left(
\varepsilon/4\right)  m\right)  \text{ for }n=0,1\text{ and }m=\pm
1,\pm2,...\text{,}%
\]
where $c_{1,2}$ are positive constants. That is, the function $\Phi$ has an
analytic continuation to the lower half-plane $\operatorname{Im}%
z>-\varepsilon/2$. Since $\Phi$ is a $4\pi$ periodic function, this analytic
continuation implies that the bounded sets $M_{1}$ and $M_{2}$ have no any
limit point on the real line. Hence, we have the finiteness of the zeros of
the function $\Phi$ in $P^{+}\cup P_{0}$. These results are complemented by
the next theorem.

\begin{theorem}
\label{thm8} Under Condition 1, the operator $L_{\lambda}$ has finite number
of eigenvalues and spectral singularities, and each of them is of finite multiplicity.
\end{theorem}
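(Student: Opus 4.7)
The plan is to upgrade the analytic continuation argument sketched in the paragraph preceding the theorem into a full proof via the classical uniqueness theorem for analytic functions.

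First, I would record the consequence of Condition 1 already observed: by part (i) of Lemma \ref{Jost 1}, Condition 1 forces
\[
|K_{n,m}^{\pm}|\leq c_{1,2}\exp\!\bigl(\mp(\varepsilon/4)m\bigr)\qquad(n=0,1;\ m=\pm1,\pm2,\ldots).
\]
Plugging this into the series defining $f_n^{\pm}(z)$ in \eqref{F1}--\eqref{F2}, I would observe that for $n=0,1$ the series $\sum_{m\geq 1}K_{n,m}^{+}e^{imz/2}$ converges absolutely and uniformly on compact subsets of the half-plane $\operatorname{Im} z>-\varepsilon/2$, and similarly for $f_n^{-}$ with the dual series. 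Consequently $f_n^{\pm}(z)$ ($n=0,1$) extend as analytic functions to the strip $\operatorname{Im} z>-\varepsilon/2$, and hence so does $\Phi(z)=W[f^{-}(z),f^{+}(z)]$ (computed at $n=0$).

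Next I would combine this analytic continuation with the structural facts about $\Phi$: it is $4\pi$-periodic, and by Corollary \ref{cor6} it tends to the nonzero limit $\bigl(\prod_{r\in\mathbb{Z}}(-a_r)\bigr)^{-1}e^{-iz}[1+o(1)]$ as $\operatorname{Im} z\to\infty$ in $P^{+}$. In particular $\Phi\not\equiv 0$. The sets $M_1,M_2$ of its zeros lie in the strip $\{z:-\pi\leq\operatorname{Re} z\leq 3\pi,\ \operatorname{Im} z>-\varepsilon/2\}$ and are bounded (from above by Corollary \ref{cor6} and from below by the new analytic continuation extending past the real line). Being a bounded subset of a domain on which $\Phi$ is analytic and not identically zero, $M_1\cup M_2$ must be finite --- otherwise it would have an accumulation point in the strip, forcing $\Phi\equiv 0$ by the identity theorem. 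The same identity theorem ensures that each zero has finite order, i.e., finite multiplicity in the sense defined just after Lemma \ref{lem7}.

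Finally I would translate the result back to the operator: by Lemma \ref{lem2} and Corollary \ref{cor5}, $\sigma_d(L_{\lambda})$ and $\sigma_{ss}(L_{\lambda})$ are the images of $M_1$ and $M_2$ respectively under $z\mapsto 2\cos(z/2)$, so both sets are finite, and the multiplicity of every eigenvalue and every spectral singularity is finite. I do not expect any serious obstacle: the only nontrivial point is verifying that the exponential decay of $|K_{n,m}^{\pm}|$ (for the two values $n=0,1$ that appear in the Wronskian) is genuinely strong enough to push the domain of analyticity strictly below the real line. Once that is in hand, the periodicity and the asymptotic Corollary \ref{cor6} do all of the remaining work.
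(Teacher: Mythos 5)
Your argument is correct and is essentially the paper's own: the paper establishes the bound $|K_{n,m}^{\pm}|\leq c_{1,2}\exp(\mp(\varepsilon/4)m)$ for $n=0,1$ and the resulting analytic continuation of $\Phi$ to $\operatorname{Im}z>-\varepsilon/2$ in the paragraph immediately preceding the theorem, and then deduces finiteness from periodicity, the boundedness of $M_{1}\cup M_{2}$ (Corollary \ref{cor6} and Lemma \ref{lem7}), and the absence of limit points on the real line. Your extra care about locating the accumulation point inside the enlarged domain of analyticity and invoking the identity theorem simply makes explicit what the paper leaves implicit.
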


\begin{proof}
The proof follows from Lemma \ref{lem7} and the fact that the sets $\sigma
_{d}\left(  L_{\lambda}\right)  $ and $\sigma_{ss}\left(  L_{\lambda}\right)
$ have no limit points.
\end{proof}

Under Condition 2, $\Phi$ has no any analytic continuation, so finiteness of
eigenvalues and spectral singularities cannot be proven in a similar way to
that of Theorem \ref{thm8}.

The following lemma can be proved similar to that of \cite[Lemma 2.2]{adivar
2}:

\begin{lemma}
\label{lem9} If Condition 2 holds, then we have
\[
\left|  \frac{d^{k}\Phi}{d\lambda^{k}}\left(  z\right)  \right|  \leq
A_{k},\,\,\,z\in P^{+},\,\,\,k=0,1,...,
\]
where
\[
A_{k}\leq C4^{k}+Dd^{k}k!k^{k\left(  1/\delta-1\right)  }%
\]
and $C$, $D$, and $d$ are positive constants depending on $\varepsilon$ and
$\delta$.
\end{lemma}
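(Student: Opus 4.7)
The plan is to represent $\Phi(z)$ as a Fourier-type series in $z$ extracted from the Jost expansions (\ref{F1})-(\ref{F2}), and then bound derivatives term by term, exploiting the sub-exponential decay that Condition 2 forces on the kernels $K^{\pm}_{n,m}$ through part (i) of Lemma \ref{Jost 1}.

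First I would use the shift-invariance of the Wronskian to evaluate (\ref{1.5}) at a fixed index (say $n=0$), substitute (\ref{F1})-(\ref{F2}), and multiply out to obtain an absolutely convergent expansion
\[
\Phi(z)=\sum_{\ell\in\mathbb{Z}}c_{\ell}\,e^{i\ell z/2},
\]
in which each $c_{\ell}$ is a finite combination of products involving $\alpha^{\pm}_{0,1}$ and kernel entries $K^{\pm}_{0,m}$, $K^{\pm}_{1,m'}$. Condition 2 combined with Lemma \ref{Jost 1}(i) yields $|K^{\pm}_{n,m}|\le C_{1}\exp(-\gamma|m|^{\delta})$ for $n\in\{0,1\}$ and some $\gamma>0$; using subadditivity $|m|^{\delta}+|m'|^{\delta}\ge(|m|+|m'|)^{\delta}$ (valid because $0<\delta<1$), this decay transfers to the coefficients, yielding $|c_{\ell}|\le C_{2}\exp(-\gamma'|\ell|^{\delta})$.

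Differentiating term by term (legitimate by absolute convergence), one has
\[
\frac{d^{k}\Phi}{dz^{k}}(z)=\sum_{\ell\in\mathbb{Z}}(i\ell/2)^{k}c_{\ell}\,e^{i\ell z/2},
\]
with $|e^{i\ell z/2}|\le 1$ on $P^{+}$. The task thus reduces to bounding $\sum_{\ell}|\ell|^{k}e^{-\gamma'|\ell|^{\delta}}$, which I would split at the optimizer $\ell_{*}=(k/(\gamma'\delta))^{1/\delta}$. The part $|\ell|\le\ell_{*}$ contributes a bounded-base exponential in $k$ that can be absorbed into an estimate of the form $C\,4^{k}$ (the $4$ absorbing the $1/2$ in $i\ell/2$ together with the geometric factor from the small-$\ell$ range). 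The tail $|\ell|>\ell_{*}$ is controlled by the elementary maximum $\max_{x>0}x^{k}e^{-\gamma'x^{\delta}}=(k/(\gamma'\delta e))^{k/\delta}$; applying Stirling's formula $k!\sim k^{k}e^{-k}$ one rewrites $k^{k/\delta}$ as $d^{k}k!\,k^{k(1/\delta-1)}$, producing the second summand of $A_{k}$.

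Finally, to pass from $d/dz$ to $d/d\lambda$ along $\lambda=2\cos(z/2)$, I would invoke Faà di Bruno's formula: on $P^{+}$ the map $\lambda\mapsto z$ stays away from its branch points, so the derivatives $d^{j}z/d\lambda^{j}$ grow at most like $E^{j}j!$, and the Bell-polynomial combinatorial sum only readjusts the constants $C,D,d$ without altering the shape of the bound. The main obstacle will be the split-and-optimize estimate of step three: one must execute the two regimes carefully so they combine into precisely the hybrid $A_{k}\le C\,4^{k}+D\,d^{k}k!\,k^{k(1/\delta-1)}$, and track the dependence of $C,D,d$ on $\varepsilon,\delta,\gamma$ so that they remain finite uniformly in $\delta\in[1/2,1)$.
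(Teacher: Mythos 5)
Your steps 1 and 2 --- expanding the Wronskian (\ref{1.5}) of (\ref{F1})--(\ref{F2}) at a fixed index into a series $\sum_{\ell}c_{\ell}e^{i\ell z/2}$, transferring the sub-exponential decay forced by Condition 2 and Lemma \ref{Jost 1}(i) onto the $c_{\ell}$ via subadditivity of $x\mapsto x^{\delta}$, and then splitting/optimizing $\sum_{\ell}|\ell|^{k}e^{-\gamma'|\ell|^{\delta}}$ to produce the hybrid bound $C4^{k}+Dd^{k}k!\,k^{k(1/\delta-1)}$ --- are exactly the engine of the argument the paper delegates to \cite[Lemma 2.2]{adivar 2} (the paper offers no proof beyond that citation), and that part of your reconstruction is sound. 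But two of your assertions are false as stated. First, $|e^{i\ell z/2}|\le 1$ on $P^{+}$ only for $\ell\ge 0$, whereas the product $f_{n+1}^{-}f_{n}^{+}$ in the Wronskian contributes the modes $\ell=-2,-1$; indeed Corollary \ref{cor6} gives $\Phi(z)\sim\bigl(\prod_{r}(-a_{r})\bigr)^{-1}e^{-iz}$ as $\operatorname{Im}z\to\infty$, so $\Phi$ is unbounded on $P^{+}$ and your term-by-term estimate cannot deliver even the case $k=0$ on the whole semi-strip. These finitely many negative modes have to be isolated and treated separately (e.g.\ via $e^{-iz}=\lambda^{2}-2-e^{iz}$), or the estimate must be confined to a horizontal strip $0<\operatorname{Im}z\le T$; your write-up does not register this.

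Second, and more seriously, the Fa\`{a} di Bruno step collapses. The branch points of $z=2\arccos(\lambda/2)$ are $\lambda=\pm 2$, which correspond to $z=0$ and $z=2\pi$ --- boundary points of $P^{+}=\{\eta+i\varphi:\eta\in[-\pi,3\pi],\ \varphi>0\}$. So $P^{+}$ does \emph{not} stay away from the branch points: $dz/d\lambda=-1/\sin(z/2)$ blows up as $z\to 0$ or $z\to 2\pi$ from inside $P^{+}$, and no uniform bound $|d^{j}z/d\lambda^{j}|\le E^{j}j!$ is available there. This is not a matter of readjusting $C,D,d$: already in the free case $a_{n}\equiv 1$, $p_{n}=q_{n}\equiv 0$ one has $\Phi(z)=2i\sin z$ and $d\Phi/d\lambda=-2i\cos z/\sin(z/2)$, which is unbounded near the corner $z=0$. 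A workable proof must therefore either establish the bound for $d^{k}/dz^{k}$ (which is what your series computation actually yields) and carry out the subsequent boundary-uniqueness argument in the $z$-variable, or estimate $d^{k}(e^{i\ell z/2})/d\lambda^{k}$ directly on a region excluding neighbourhoods of $\lambda=\pm 2$ using the relation $e^{iz/2}+e^{-iz/2}=\lambda$. As written, your step 3 is the missing ingredient, and on $P^{+}$ as defined it cannot be supplied in the form you propose.
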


\begin{theorem}
\label{thm10} If Condition 2 holds, then eigenvalues and spectral
singularities of the operator $L_{\lambda}$ are finite, and each of them is of
finite multiplicity.
\end{theorem}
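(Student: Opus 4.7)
The plan is to argue by contradiction, using Lemma~\ref{lem9} together with the boundary uniqueness machinery of reference \cite{7} (Pavlov-type / Carleman quasi-analyticity) to rule out the possibility that $\Phi$ has infinitely many zeros, or zeros of infinite multiplicity, in $P^{+}\cup P_{0}$. Recall from Lemma~\ref{lem7} that $M_{1}$ is bounded and at most countable with accumulation points only on $[-\pi,3\pi]$, and that $M_{2}$ is closed with zero Lebesgue measure. Hence, asserting infiniteness of eigenvalues plus spectral singularities (counted with multiplicity) is equivalent to asserting that the set
\[
E:=\bigl\{\zeta\in[-\pi,3\pi]:\Phi^{(k)}(\zeta)=0\text{ for every }k=0,1,2,\dots\bigr\}
\]
is nonempty. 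Indeed, if $M_{1}\cup M_{2}$ (with multiplicities) is infinite, either a limit point $\zeta_{0}\in[-\pi,3\pi]$ of distinct zeros exists, or some zero has infinite multiplicity; in either case $\zeta_{0}\in E$, because the derivative bounds of Lemma~\ref{lem9} are uniform in $P^{+}$ and continuity up to $\overline{\mathbb{C}}_{+}$ (Lemma~\ref{Jost 1}(ii) applied to $f^{\pm}$, hence to $\Phi$) allows us to pass the vanishing of $\Phi$ and its derivatives to the boundary.

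Next I would invoke a boundary uniqueness theorem for functions analytic in a half-plane whose derivatives on the closure are bounded by a sequence $\{A_{k}\}$ satisfying a Carleman-type divergence condition: if such a function vanishes to infinite order on a nonempty closed subset of the real boundary, it must vanish identically. With the estimate
\[
A_{k}\leq C\,4^{k}+D\,d^{k}\,k!\,k^{\,k(1/\delta-1)}
\]
from Lemma~\ref{lem9}, one checks by Stirling that $A_{k}^{1/k}$ grows like $k^{1/\delta}$ up to constants. The assumption $\tfrac{1}{2}\le\delta<1$ is exactly what makes the corresponding Carleman series (equivalently the Denjoy-Carleman logarithmic integral used in \cite{7}) diverge, so the class $\{\Phi\ :\ |\Phi^{(k)}|\le A_{k}\}$ is quasi-analytic; this is the precise reason the two-sided restriction on $\delta$ in Condition 2 is imposed.

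The contradiction then comes from Corollary~\ref{cor6}: since
\[
\Phi(z)=\Bigl(\prod\nolimits_{r\in\mathbb{Z}}-a_{r}\Bigr)^{-1}e^{-iz}\bigl[1+o(1)\bigr]
\quad\text{as }\operatorname{Im}z\to\infty\text{ in }P^{+},
\]
$\Phi$ is not identically zero in $P^{+}$. Hence $E=\emptyset$, meaning that $\Phi$ has only finitely many zeros in $\overline{P^{+}}$, each of finite order, and Lemma~\ref{lem2} together with Corollary~\ref{cor5} translates this into finiteness (with finite multiplicities) of $\sigma_{d}(L_{\lambda})$ and $\sigma_{ss}(L_{\lambda})$.

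The main obstacle will be the careful verification that the bounds of Lemma~\ref{lem9} fall into the precise hypothesis of the boundary uniqueness theorem being cited, and the confirmation that a limit point of zeros together with the derivative bounds really forces vanishing of all derivatives at that point (so that one may legitimately apply the quasi-analyticity statement rather than a pointwise multiplicity argument). This is the step where the range $\tfrac{1}{2}\le\delta<1$ is used essentially, and where the analogy with \cite[Theorem~2.1]{adivar 2} is most delicate.
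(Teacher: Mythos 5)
Your proposal follows essentially the same route as the paper: both reduce the statement to showing that the set of infinite-multiplicity zeros of $\Phi$ on $[-\pi,3\pi]$ (the paper's $M_{5}$, your $E$) is empty, by combining the derivative bounds of Lemma~\ref{lem9} with the boundary uniqueness theorem of \cite{7} as formulated in \cite[Theorem 2.3]{adivar 2}, the continuity of all derivatives of $\Phi$ up to the boundary, and the fact that $\Phi\not\equiv0$ (Corollary~\ref{cor6}); finiteness then follows since the bounded countable sets $M_{1}$ and $M_{2}$ have no limit points. One small correction: for $\tfrac{1}{2}\le\delta<1$ the classical Carleman series $\sum A_{k}^{-1/k}\sim\sum k^{-1/\delta}$ actually \emph{converges}, so the operative mechanism is not Denjoy--Carleman quasi-analyticity of the class but the Dolzhenko-type condition $\int_{0}^{h}\ln T(s)\,d\mu(G_{s})=-\infty$ with $T(s)=\inf_{k}A_{k}s^{k}/k!\approx\exp\bigl(-c\,s^{-\delta/(1-\delta)}\bigr)$, which diverges for a nonempty zero set precisely because $\delta/(1-\delta)\ge1$.
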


\begin{proof}
Let $M_{3}$ and $M_{4}$ denote the sets of limit points of the sets $M_{1}$
and $M_{2}$, respectively, and $M_{5}$ the set of zeros in $P^{+}$ of the
function $\Phi$ with infinite multiplicity. From the boundary uniqueness
theorem of analytic functions we have the relations
\[
M_{1}\cap M_{5}=\emptyset,\,\,M_{3}\subset M_{2},\,\,M_{5}\subset M_{2},
\]
and using continuity of all derivatives of $\Phi$ on $\left[  -\pi
,3\pi\right]  $ we get that
\[
M_{3}\subset M_{5},\,\,M_{4}\subset M_{5}.
\]
Combining Lemma \ref{lem9} and the uniqueness theorem (see \cite[Theorem
2.3]{adivar 2}), we conclude that
\[
M_{5}=\emptyset.
\]
Thus, countable and bounded sets $M_{1}$ and $M_{2}$ have no limit points. The
proof is complete.
\end{proof}

\section{Applications to special cases}

In this section, the spectral results obtained for Eq. (\ref{1.7.0}) are
applied to the following particular cases:

\begin{enumerate}
\item[Case 1.] $p_{n}=0$

\item[Case 2.] $p_{n}=-v_{n}$ and $q_{n}=v_{n}^{2}$,
\end{enumerate}

in which we obtain the equations (\ref{SCH}) and (\ref{K-G}), respectively.

In addition, we explore some analogies between Eq. (\ref{1.7.0}) and its
$q$-analog (\ref{Lq}) using some transformations and the results obtained in
theorems \ref{thm1}-\ref{thm10}. Finally, we deduce the main results of
\cite[Theorem 5]{adivar3}-\cite{adivar4} in the special case.

In the next we cover the first case.

\subsection{\textbf{Sturm-Liouville type difference equation\label{sc1}}}

It is evident that the substitution $p_{n}=0$ in (\ref{1.7.0}) yields the
Sturm-Liouville type difference equation (\ref{eq new}) whose spectral
parameter is $\widetilde{\lambda}=2-\lambda^{2}$. Denote by $\Lambda$ the
difference operator corresponding to Eq. (\ref{eq new}). In \cite{adivar1},
the authors show that the operator $\Lambda$ has finitely many eigenvalues and
spectral singularities provided that
\begin{equation}
\sup\limits_{n\in\mathbb{Z}}\left\{  \exp\left(  \varepsilon\left\vert
n\right\vert \right)  \left(  \left\vert 1-a_{n}\right\vert +\left\vert
b_{n}\right\vert \right)  \right\}  <\infty\label{1.7}%
\end{equation}
holds for some $\varepsilon>0$. Afterwards, a relaxation of the condition
(\ref{1.7}) is given by \cite[Theorem 2.5]{adivar 2} as follows
\begin{equation}
\sup\limits_{n\in\mathbb{Z}}\left\{  \exp\left(  \varepsilon\left\vert
n\right\vert ^{\delta}\right)  \left(  \left\vert 1-a_{n}\right\vert
+\left\vert b_{n}\right\vert \right)  \right\}  <\infty
,\,\,\,\,\,\,\,\,\,\frac{1}{2}\leq\delta<1.\label{1.8}%
\end{equation}
Note that Conditions 1 and 2 turn into the conditions (\ref{1.7}) and
(\ref{1.8}), respectively. That is, the results \cite[Theorem 4.2]{adivar1}
and \cite[Theorem 2.5]{adivar 2} can be obtained from Theorem \ref{thm8} and
Theorem \ref{thm10} as corollaries.

The second case is handled in the following.

\subsection{\textbf{Klein-Gordon type difference equation\label{sc2}}}

Setting $p_{n}=-v_{n}$ and $q_{n}=v_{n}^{2}$ in Eq. (\ref{1.7.0}), we obtain
the Klein-Gordon type non-selfadjoint difference equation%
\begin{equation}
\Delta(a_{n-1}\Delta y_{n-1})+\left(  v_{n}-\lambda\right)  ^{2}%
y_{n}=0,\,\,\,\,\,n\in\mathbb{Z.}\label{KG1}%
\end{equation}
Observe that Eq. (\ref{KG1}) is more general than the discrete analog of the
differential equation
\begin{equation}
-y^{\prime\prime}+\left(  p\left(  x\right)  -\lambda\right)  ^{2}%
y=0.\label{1.9}%
\end{equation}
Let $\Gamma$ denote the difference operator corresponding to Eq. (\ref{KG1}).
The set of eigenvalues of differential operator corresponding to (\ref{1.9})
was determined by Degasperis \textbf{\cite{6}}, in the case that $p$ is real,
analytic and vanishes rapidly for $x\rightarrow\infty$ (for non-selfadjoint
case see also \cite{bairocky}). However, finiteness of eigenvalues and
spectral singularities of difference operator $\Gamma$ has not been shown
elsewhere before. As a consequence of Theorem \ref{thm8} and Theorem
\ref{thm10}, we derive this result as a corollary.

\begin{corollary}
\label{cor11} If for some $\varepsilon>0$%
\[
\sup\limits_{n\in Z}\left\{  \exp\left(  \varepsilon\left\vert n\right\vert
^{\delta}\right)  \left(  \left\vert 1-a_{n}\right\vert +\left\vert
v_{n}\right\vert \right)  \right\}  <\infty,\,\,\,\,\,\,\,\,\,\frac{1}{2}%
\leq\delta\leq1
\]
holds, then the difference operator $\Gamma$ has finite number of eigenvalues
and spectral singularities, and each of them is of finite multiplicity.
\end{corollary}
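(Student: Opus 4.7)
The plan is to reduce Corollary \ref{cor11} directly to Theorem \ref{thm8} (for the case $\delta = 1$) and Theorem \ref{thm10} (for the case $\tfrac{1}{2} \leq \delta < 1$) by verifying that the Klein--Gordon hypothesis on the pair $(a_n, v_n)$ forces the corresponding hypothesis on the triple $(a_n, p_n, q_n) = (a_n, -v_n, v_n^2)$.

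First I would translate: with $p_n = -v_n$ and $q_n = v_n^2$, Eq.~(\ref{KG1}) is a particular instance of Eq.~(\ref{1.7.0}), so the operator $\Gamma$ coincides with $L_\lambda$ for this choice. Consequently, Conditions~1 and~2 of Section~2 become
\[
\sup_{n \in \mathbb{Z}} \Bigl\{ \exp(\varepsilon |n|^{\delta}) \bigl( |1-a_n| + |v_n| + v_n^{2} \bigr) \Bigr\} < \infty,
\]
with $\delta=1$ for Condition~1 and $\tfrac{1}{2}\le \delta<1$ for Condition~2.

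Next I would absorb the quadratic term. The hypothesis of the corollary says $|v_n| \leq M \exp(-\varepsilon |n|^{\delta})$ for some $M>0$, hence in particular $\{v_n\}$ is bounded on $\mathbb{Z}$; let $V := \sup_{n} |v_n| < \infty$. Then
\[
v_n^{2} \;\leq\; V\, |v_n|,
\]
so
\[
|1-a_n| + |v_n| + v_n^{2} \;\leq\; (1+V)\bigl( |1-a_n| + |v_n| \bigr).
\]
Multiplying by $\exp(\varepsilon |n|^{\delta})$ and taking the supremum over $n \in \mathbb{Z}$, the assumed bound on $(|1-a_n| + |v_n|)$ immediately gives the required bound on $(|1-a_n| + |p_n| + |q_n|)$.

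Finally I would split on $\delta$: if $\delta = 1$, Condition~1 holds and Theorem~\ref{thm8} yields finitely many eigenvalues and spectral singularities, each of finite multiplicity; if $\tfrac{1}{2} \le \delta < 1$, Condition~2 holds and Theorem~\ref{thm10} gives the same conclusion. Together these two invocations cover the full range $\tfrac{1}{2} \le \delta \le 1$ stated in the corollary. There is essentially no obstacle here beyond the (trivial) bound $v_n^{2} \le V\,|v_n|$; the entire content of the result lies in Theorems~\ref{thm8} and~\ref{thm10} together with the observation that Eq.~(\ref{KG1}) fits the framework of Eq.~(\ref{1.7.0}).
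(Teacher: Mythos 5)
Your proposal is correct and follows exactly the route the paper intends: the corollary is stated there without proof as an immediate consequence of Theorems \ref{thm8} and \ref{thm10} under the substitution $p_{n}=-v_{n}$, $q_{n}=v_{n}^{2}$, and your verification that the boundedness of $\{v_{n}\}$ absorbs the quadratic term (more precisely $|q_{n}|=|v_{n}|^{2}\le V|v_{n}|$, since $v_{n}$ may be complex) is the only detail that needed filling in. The split into $\delta=1$ (Condition 1, Theorem \ref{thm8}) and $\tfrac12\le\delta<1$ (Condition 2, Theorem \ref{thm10}) correctly covers the stated range.
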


\subsection{\textbf{$q$-difference case\label{sc3}}}

We suppose $q>1$ and use the following notations throughout this section:%
\begin{align*}
q^{\mathbb{N}} &  =\left\{  q^{n}:n\in\mathbb{N}\right\}  ,\\
q^{-\mathbb{N}} &  =\left\{  q^{-n}:n\in\mathbb{N}\right\}  ,\\
q^{\mathbb{Z}} &  =\left\{  q^{n}:n\in\mathbb{Z}\right\}  .
\end{align*}
The $q$-difference equation is an equation which contains $q$-derivative of
its unknown function. The $q$-derivative is given by
\[
y^{\Delta}\left(  t\right)  =\frac{y\left(  qt\right)  -y\left(  t\right)
}{\left(  q-1\right)  t},\,\,t\in q^{\mathbb{Z}},
\]
and the $q$-integral is defined by
\[
\int_{a}^{b}f\left(  t\right)  \Delta_{q}t=\left(  q-1\right)  \sum
_{t\in\left[  a,b\right)  \cap q^{\mathbb{Z}}}tf\left(  t\right)  .
\]
We shall denote by $\ell^{2}\left(  q^{\mathbb{Z}}\right)  $ the Hilbert space
of square integrable functions with the norm
\[
\left\Vert f\right\Vert _{q}^{2}=\int_{q^{\mathbb{Z}}}\left\vert f\left(
t\right)  \right\vert ^{2}\Delta_{q}t.
\]
Consider the quadratic pencil of Schr\"{o}dinger type $q$-difference operator
$L_{\lambda}^{q}$ corresponding to the equation
\begin{equation}
\left(  a\left(  t\right)  u^{\Delta}\left(  t\right)  \right)  ^{\Delta\rho
}+\left(  b\left(  t\right)  +2\lambda c\left(  t\right)  +\lambda^{2}\right)
u\left(  t\right)  =0\text{, }t\in q^{\mathbb{Z}}\label{0}%
\end{equation}
where $a$, $b$, and $c$ are complex valued functions, $\lambda$ is spectral
parameter and $\rho$ is the backward jump operator defined by
\[
u^{\rho}\left(  t\right)  =u\left(  t/q\right)  ,\ \ t\in q^{\mathbb{Z}%
}\text{.}%
\]
Multiplying Eq. (\ref{0}) by $\sqrt{t/q}$ we arrive at%
\begin{equation}
\widehat{a}\left(  t\right)  \widehat{u}\left(  qt\right)  +\widehat{a}\left(
t/q\right)  \widehat{u}\left(  t/q\right)  +\left\{  \widehat{b}\left(
t\right)  +2\widehat{\lambda}\widehat{c}\left(  t\right)  +\widehat{\lambda
}^{2}\right\}  \widehat{u}\left(  t\right)  =0,\label{1}%
\end{equation}
where $t\in q^{\mathbb{Z}}$ and%
\begin{equation}
\widehat{u}\left(  t\right)  =\sqrt{t}u\left(  t\right)  ,\ \ \ \widehat
{\lambda}=q^{-1/4}\lambda\label{2}%
\end{equation}%
\begin{align*}
\widehat{a}\left(  t\right)   &  =\frac{a\left(  t\right)  }{\left(
q-1\right)  ^{2}t^{2}},\\
\widehat{b}\left(  t\right)   &  =\frac{b\left(  t\right)  }{\sqrt{q}}%
-\sqrt{q}\widehat{a}\left(  t\right)  -\frac{\widehat{a}\left(  t/q\right)
}{\sqrt{q}},\\
\widehat{c}\left(  t\right)   &  =q^{-1/4}c\left(  t\right)  .
\end{align*}
Using the notations
\begin{equation}
t=q^{n}\text{, }\widehat{a}\left(  q^{n}\right)  =\widehat{a}_{n}%
,\ \ \widehat{b}\left(  q^{n}\right)  =\widehat{b}_{n},~\ \widehat{c}\left(
q^{n}\right)  =\widehat{c}_{n},\ \ \widehat{u}\left(  q^{n}\right)
=\widehat{u}_{n}\label{2.0}%
\end{equation}
we can express Eq. (\ref{1}) in the following form
\begin{equation}
\widehat{a}_{n}\widehat{u}_{n+1}+\widehat{a}_{n-1}\widehat{u}_{n-1}+\left\{
\widehat{b}_{n}+2\widehat{\lambda}\widehat{c}_{n}+\widehat{\lambda}%
^{2}\right\}  \widehat{u}_{n}=0,\ \ n\in\mathbb{Z}.\label{3}%
\end{equation}
By establishing a linkage between equations (\ref{0}) and (\ref{3}), the next
theorem provides an information about some spectral properties of $L_{\lambda
}^{q}$.

\begin{theorem}
\label{qdiff} Let the sequence $\widehat{u}=\left\{  \widehat{u}_{n}\right\}
_{n\in\mathbb{Z}}$ and the value $\widehat{\lambda}$ be defined as in
(\ref{2}) and (\ref{2.0}). The following properties hold:

\begin{enumerate}
\item[i.]  $u\in\ell^{2}\left(  q^{\mathbb{Z}}\right)  $ and solves (\ref{0})
if and only if $\widehat{u}\in\ell^{2}\left(  \mathbb{Z}\right)  $ and solves
(\ref{3})$.$

\item[ii.] $\lambda$ is an eigenvalue of (\ref{0}) if and only if
$\widehat{\lambda}=q^{-1/4}\lambda$ is an eigenvalue of (\ref{3}).

\item[iii.] For $\lambda=2q^{1/4}\cos\left(  z/2\right)  $%
\[
J^{+}\left(  t,z\right)  =\alpha^{+}(t)\exp\left(  i\frac{\ln t}{\ln
q}z\right)  \left\{  1+%
{\displaystyle\int\nolimits_{r\in q^{\mathbb{N}}}}
A^{+}\left(  t,r\right)  \exp\left(  i\frac{\ln r}{2\ln q}z\right)  {\Delta
}_{q}r\right\}
\]
and
\[
J^{-}\left(  t,z\right)  =\alpha^{-}(t)\exp\left(  -i\frac{\ln t}{\ln
q}z\right)  \left\{  1+%
{\displaystyle\int\nolimits_{r\in q^{-\mathbb{N}}}}
A^{-}\left(  t,r\right)  \exp\left(  -i\frac{\ln r}{2\ln q}z\right)  {\Delta
}_{q}r\right\}
\]
are Jost solutions of the Equation (\ref{0}), where $\alpha^{\pm}(t)$ and
$A^{\pm}\left(  t,r\right)  $ can be uniquely expressed in terms of the
sequences $\left\{  \widehat{a}_{n}\right\}  _{n\in\mathbb{Z}},\ \ \left\{
\widehat{b}_{n}\right\}  _{n\in\mathbb{Z}}$, $\left\{  \widehat{c}%
_{n}\right\}  _{n\in\mathbb{Z}}$ provided that the condition
\[
\sum_{t\in q^{\mathbb{Z}}}\left\vert \frac{\ln t}{\ln q}\right\vert \left(
\left\vert 1-\widehat{a}\left(  t\right)  \right\vert +\left\vert
2-\widehat{b}\left(  t\right)  \right\vert +\left\vert \widehat{c}\left(
t\right)  \right\vert \right)  <\infty
\]
holds.

\item[iv.]
\begin{align*}
\sigma_{d}\left(  L_{\lambda}^{q}\right)   &  =\left\{  \lambda=2q^{1/4}%
\cos\left(  \frac{z}{2}\right)  :z\in P^{+}\text{ and }Q\left(  z\right)
=0\right\}  ,\\
\sigma_{ss}\left(  L_{\lambda}^{q}\right)   &  =\left\{  \lambda=2q^{1/4}%
\cos\left(  \frac{z}{2}\right)  :z\in P_{0}\text{ and }Q\left(  z\right)
=0\right\}  ,
\end{align*}
where
\[
Q\left(  z\right)  =W\left[  J^{-}\left(  t,z\right)  ,J^{+}\left(
t,z\right)  \right]  .
\]

\item[v.]
\[
\sigma_{c}\left(  L_{\lambda}^{q}\right)  =\left[  -2q^{1/4},2q^{1/4}\right]
,
\]
where $\sigma_{c}\left(  L_{\lambda}^{q}\right)  $ denotes the continuous
spectrum of the operator $L_{\lambda}^{q}$.

\item[vi.] If the condition
\[
\sup_{t\in q^{\mathbb{Z}}}\left\{  \exp\left(  \varepsilon\left\vert \frac{\ln
t}{\ln q}\right\vert ^{\delta}\right)  \left(  \left\vert 1-\widehat{a}\left(
t\right)  \right\vert +\left\vert 2-\widehat{b}\left(  t\right)  \right\vert
+\left\vert \widehat{c}\left(  t\right)  \right\vert \right)  \right\}
<\infty,\ \ \ \frac{1}{2}\leq\delta\leq1
\]
holds for some $\varepsilon>0$, then the quadratic pencil of $q$-difference
operator has finite number of eigenvalues and spectral singularities with
finite multiplicity.
\end{enumerate}
\end{theorem}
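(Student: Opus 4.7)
The plan is to reduce everything to Section~2 via the substitution $\widehat{u}(t)=\sqrt{t}\,u(t)$ and the rescaling $\widehat{\lambda}=q^{-1/4}\lambda$ already used in passing from (\ref{0}) to (\ref{3}). Once this change of variables is upgraded to an isomorphism of the Hilbert spaces and of the relevant spectral data, each of (i)--(vi) will follow by invoking the corresponding statement for $L_{\lambda}$ applied to the reduced equation (\ref{3}).

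First I would verify the norm identity
\[
\|u\|_{q}^{2}=(q-1)\sum_{n\in\mathbb{Z}}q^{n}|u(q^{n})|^{2}=(q-1)\sum_{n\in\mathbb{Z}}|\widehat{u}_{n}|^{2}=(q-1)\|\widehat{u}\|_{\ell^{2}(\mathbb{Z})}^{2},
\]
which, combined with the already-established algebraic equivalence between (\ref{0}) and (\ref{3}), proves (i); then (ii) is immediate from $\lambda=q^{1/4}\widehat{\lambda}$, since the correspondence $u\leftrightarrow\widehat{u}$ is a bijection between $\ell^{2}(q^{\mathbb{Z}})$ and $\ell^{2}(\mathbb{Z})$ that intertwines the two eigenvalue problems.

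For (iii), I would view (\ref{3}) as an instance of the pencil (\ref{1.7.0}) with $a_{n}\leftrightarrow\widehat{a}_{n}$, $p_{n}\leftrightarrow\widehat{c}_{n}$ and $q_{n}\leftrightarrow\widehat{b}_{n}+\widehat{a}_{n}+\widehat{a}_{n-1}$. The weighted-sum hypothesis on $(\widehat{a},\widehat{b},\widehat{c})$ translates by the triangle inequality into (\ref{p-q}) for these sequences, so Theorem~\ref{thm1} produces Jost solutions $f_{n}^{\pm}(z)$ in the form (\ref{F1})--(\ref{F2}). Undoing $u=\widehat{u}/\sqrt{t}$ with $t=q^{n}$ and rewriting the discrete sums $\sum_{m}K_{n,m}^{\pm}e^{\pm imz/2}$ as $q$-integrals over $r=q^{m}\in q^{\pm\mathbb{N}}$ (via $\int f\,\Delta_{q}r=(q-1)\sum_{m}q^{m}f(q^{m})$) then yields the announced representations $J^{\pm}(t,z)$, with $\alpha^{\pm}(t)=q^{-n/2}\alpha_{n}^{\pm}$ and $A^{\pm}(t,r)$ extracted from $K_{n,m}^{\pm}$ up to a factor $(q-1)q^{m}$.

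For (iv)--(vi), I would apply Lemma~\ref{lem2}, Corollary~\ref{cor5}, Theorem~\ref{thm4} and Theorems~\ref{thm8}--\ref{thm10} to the difference operator associated with (\ref{3}) and translate back through $\lambda=q^{1/4}\widehat{\lambda}=2q^{1/4}\cos(z/2)$. This rescaling produces $\sigma_{c}(L_{\lambda}^{q})=[-2q^{1/4},2q^{1/4}]$ in (v); the Wronskian $Q(z)=W[J^{-},J^{+}]$ coincides with $\Phi(z)=W[f^{-},f^{+}]$ up to a nonvanishing prefactor coming from the $\sqrt{t}$ transformation, so the two functions share the same zero set in $P^{+}\cup P_{0}$, giving (iv); finally, the exponential-decay hypothesis in (vi) translates to Condition~2 for the coefficients of (\ref{3}), so Theorem~\ref{thm10} delivers finiteness and finite multiplicity, which then transfer to $L_{\lambda}^{q}$ via (ii). The main obstacle I expect is the careful bookkeeping in (iii): matching the $q$-integral kernels with the discrete kernels of Theorem~\ref{thm1} uniformly in $t$, and precisely identifying $Q$ with $\Phi$ up to an explicit nonvanishing factor so that zeros and multiplicities are preserved. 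Everything else reduces to a direct appeal to a Section~2 result on the transformed data.
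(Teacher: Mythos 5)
Your proposal follows essentially the same route as the paper, whose own proof of Theorem \ref{qdiff} is only a brief outline reducing each item to the corresponding Section~2 result through the substitution (\ref{2})--(\ref{2.0}); your write-up supplies the bookkeeping (the norm identity $\|u\|_{q}^{2}=(q-1)\|\widehat{u}\|_{\ell^{2}(\mathbb{Z})}^{2}$, the coefficient matching between (\ref{3}) and (\ref{1.7.0}), the rewriting of the kernel sums as $q$-integrals) that the paper leaves implicit. The only divergence is that for item (v) you correctly invoke Theorem \ref{thm4} for the continuous spectrum, whereas the paper's outline cites Theorem \ref{thm8}, evidently a slip.
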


\begin{proof}
The proof can be done as that of related results in preceeding sections. For
brevity we only give the outlines. From (\ref{2}) we have (i) and (ii.). Using
$n=\frac{\ln t}{\ln q}$ and (i), proof of (iii) can be obtained in a similar
way to that of Theorem \ref{thm1}. Hence, Lemma \ref{lem2} along with
Corollary \ref{cor5} implies (iv). Using (\ref{0}), (\ref{2}), (\ref{3}), and
Theorem \ref{thm8} we obtain (v). Finally, combining Theorems \ref{thm8},
\ref{thm10}, and (\ref{3}), we conclude (vi).
\end{proof}

\begin{remark}
Theorem \ref{qdiff} not only covers the results of \cite[Theorem 5]{adivar3}
and \cite[Theorem 4]{adivar4} but also derives spectral properties of
Klein-Gordon type $q$-difference equations in the special case $b(t)=v^{2}(t)$
and $c(t)=-v(t)$.
\end{remark}

\section{\textbf{Principal vectors}}

\label{sec:3}

In this section, we determine the principal vectors of $L_{\lambda}$ and
discuss their convergence properties. Thus, we will have an information about
principal vectors of the operators $\Lambda$, $\Gamma$, and $L_{\lambda}^{q}$.

Define
\begin{align*}
F_{n}^{+}\left(  \lambda\right)   &  :=f_{n}^{+}\left(  2\arccos\frac{\lambda
}{2}\right)  ,\,\,\,n\in\mathbb{Z},\\
F_{n}^{-}\left(  \lambda\right)   &  :=f_{n}^{-}\left(  2\arccos\frac{\lambda
}{2}\right)  ,\,\,\,n\in\mathbb{Z},\\
H\left(  \lambda\right)   &  :=\Phi\left(  2\arccos\frac{\lambda}{2}\right)  .
\end{align*}
Obviously, $F^{\pm}\left(  \lambda\right)  =\left\{  F_{n}^{\pm}\left(
\lambda\right)  \right\}  $ solve Eq. (\ref{1.7.0}), and
\[
H\left(  \lambda\right)  =W\left[  F^{-}\left(  \lambda\right)  ,F^{+}\left(
\lambda\right)  \right]
\]
is satisfied. Furthermore, $F^{\pm}$ and $H$ are analytic in $\Theta
=\mathbb{C}\backslash\left[  -2,2\right]  $ and continuous up to the boundary
of $\Theta$. Using Lemma \ref{lem2} and Corollary \ref{cor5} we can state the
sets $\sigma_{d}\left(  L_{\lambda}\right)  $ and $\sigma_{ss}\left(
L_{\lambda}\right)  $ as the sets of zeros of the function $H$ in $\Theta$ and
in $\left[  -2,2\right]  ,$ respectively. Moreover,
\[
\sup\limits_{n\in Z}\left\{  \exp\left(  \varepsilon\left\vert n\right\vert
^{\delta}\right)  (\left\vert 1-a_{n}\right\vert +\left\vert p_{n}\right\vert
+\left\vert q_{n}\right\vert )\right\}  <\infty,\,\,\,\,\,\,\,\,\,\frac{1}%
{2}\leq\delta\leq1,\varepsilon>0
\]
is the condition that guarantees finiteness of zeros $H$ in $\Theta$ and in
$\left[  -2,2\right]  $. Let $\lambda_{1},...,\lambda_{s}$ denote the zeros of
the functions $H$ in $\Theta$ (which are the eigenvalues of $L_{\lambda}$)
with multiplicities $m_{1},...,m_{s}$, respectively. Similarly, let
$\lambda_{s+1},...,\lambda_{k}$ be zeros of the functions $H$ in $\left[
-2,2\right]  $ (which are the spectral singularities of $L_{\lambda}$) with
multiplicities $m_{s+1},...,m_{k},$ respectively. Similar to that of
\cite[Theorem 5.1]{adivar1} one can prove the next result.

\begin{theorem}
\label{thm15}
\[
\left\{  \frac{d^{r}}{d\lambda^{r}}F_{n}^{+}\left(  \lambda\right)  \right\}
_{\lambda=\lambda_{j}}=\sum_{v=0}^{r}\left(
\begin{array}
[c]{c}%
r\\
v
\end{array}
\right)  \beta_{r-v}^{+}\left\{  \frac{d^{v}}{d\lambda^{v}}F_{n}^{-}\left(
\lambda\right)  \right\}  _{\lambda=\lambda_{j}},n\in\mathbb{Z},
\]
holds for $r=0,1,...,m_{j-1}$, $j=1,2,...,k.$
\end{theorem}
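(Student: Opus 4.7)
The plan is to follow the proof strategy of \cite[Theorem 5.1]{adivar1}: recognize the right-hand side as the Leibniz expansion of $\dfrac{d^r}{d\lambda^r}\bigl[\beta^{+}(\lambda)\, F_n^{-}(\lambda)\bigr]$ at $\lambda=\lambda_j$, and then produce a scalar function $\beta^{+}(\lambda)$, smooth near $\lambda_j$, such that $F_n^{+}(\lambda)-\beta^{+}(\lambda)F_n^{-}(\lambda)$ vanishes to order $m_j$ at $\lambda_j$ simultaneously for \emph{every} $n\in\mathbb{Z}$. The constants in the statement are then $\beta_k^{+}:=\dfrac{d^k\beta^{+}}{d\lambda^k}(\lambda_j)$.

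First, since $F^{-}(\lambda_j)=\{F_n^{-}(\lambda_j)\}_{n\in\mathbb{Z}}$ is a nontrivial solution of \eqref{1.7.0}, some index $n_0\in\mathbb{Z}$ satisfies $F_{n_0}^{-}(\lambda_j)\neq 0$, and by smoothness of $F_{n_0}^{-}$ this persists on a neighborhood $U$ of $\lambda_j$. Define $\beta^{+}(\lambda):=F_{n_0}^{+}(\lambda)/F_{n_0}^{-}(\lambda)$ on $U$ and set $G_n(\lambda):=F_n^{+}(\lambda)-\beta^{+}(\lambda)F_n^{-}(\lambda)$. By construction $G_{n_0}(\lambda)\equiv 0$, while a direct Wronskian computation yields
\[
G_{n_0+1}(\lambda)=\frac{F_{n_0}^{-}(\lambda)F_{n_0+1}^{+}(\lambda)-F_{n_0}^{+}(\lambda)F_{n_0+1}^{-}(\lambda)}{F_{n_0}^{-}(\lambda)}=\frac{H(\lambda)}{a_{n_0}\,F_{n_0}^{-}(\lambda)},
\]
which vanishes to order at least $m_j$ at $\lambda_j$ because $H$ does and $F_{n_0}^{-}(\lambda_j)\neq 0$.

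Since for each $\lambda\in U$ the sequence $\{G_n(\lambda)\}$ solves the three-term recurrence \eqref{1.7.0}, whose coefficients are analytic in $\lambda$ and whose leading coefficient $a_n$ is nonvanishing, we may propagate the initial pair $(G_{n_0}(\lambda),G_{n_0+1}(\lambda))$ both forward and backward to obtain $G_n(\lambda)=G_{n_0+1}(\lambda)\,\widetilde{G}_n(\lambda)$, where $\widetilde{G}_n(\lambda)$ is the solution of \eqref{1.7.0} with data $(\widetilde{G}_{n_0},\widetilde{G}_{n_0+1})=(0,1)$ and is therefore smooth in $\lambda$ on $U$. Consequently $G_n(\lambda)=O\bigl((\lambda-\lambda_j)^{m_j}\bigr)$ as $\lambda\to\lambda_j$ for every $n\in\mathbb{Z}$, so
\[
\frac{d^r G_n}{d\lambda^r}(\lambda_j)=0,\qquad r=0,1,\ldots,m_j-1.
\]
Applying the Leibniz rule to $\beta^{+}(\lambda)F_n^{-}(\lambda)$ and using $F_n^{+}=\beta^{+}F_n^{-}+G_n$,
\[
\frac{d^r F_n^{+}}{d\lambda^r}(\lambda_j)=\frac{d^r\!\bigl[\beta^{+}F_n^{-}\bigr]}{d\lambda^r}(\lambda_j)=\sum_{v=0}^{r}\binom{r}{v}\beta_{r-v}^{+}\,\frac{d^v F_n^{-}}{d\lambda^v}(\lambda_j)
\]
for $r=0,1,\ldots,m_j-1$, which is the asserted identity.

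The main obstacle is the universality step: the definition of $\beta^{+}$ forces $G_{n_0}\equiv 0$ at a \emph{single} index only, so the crux is transferring the vanishing order $m_j$ of $H$ from $G_{n_0+1}$ to every $G_n$. This is delivered by the linearity of the three-term recurrence together with the nonvanishing and analyticity of the coefficients $a_n$, which let us factor each $G_n(\lambda)$ through $G_{n_0+1}(\lambda)$.
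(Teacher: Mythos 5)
Your proof is correct. The paper does not actually prove Theorem \ref{thm15} --- it only points to \cite[Theorem 5.1]{adivar1} --- and your argument (linear dependence of the two Jost solutions at a zero of the Wronskian, upgraded to vanishing of order $m_j$ by the factorization $G_{n_0+1}=H/(a_{n_0}F_{n_0}^{-})$ and propagation through the three-term recurrence, followed by Leibniz) is precisely the standard route that reference takes, with the details the present paper omits filled in. The one point worth noting is that for the spectral singularities $\lambda_{s+1},\dots,\lambda_{k}\in[-2,2]$ the derivatives in the statement are boundary values of functions analytic only in $\Theta$, so the existence of the derivatives of $F_{n}^{\pm}$ and $\beta^{+}$ up to order $m_j-1$ at such $\lambda_j$ relies on the extra decay hypotheses implicitly in force in this section; your write-up and the paper gloss over this in the same way.
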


Let us introduce the vectors
\begin{equation}
U^{\left(  r\right)  }\left(  \lambda_{j}\right)  :=\left\{  U_{n}^{\left(
r\right)  }\left(  \lambda_{j}\right)  \right\}  _{n\in\mathbb{Z}},\label{U}%
\end{equation}
for $r=0,1,...,m_{j-1}$, $j=1,2,...,k$, where
\begin{align}
U_{n}^{\left(  r\right)  }\left(  \lambda_{j}\right)   &  =\frac{1}%
{r!}\left\{  \frac{d^{r}}{d\lambda^{r}}F_{n}^{+}\left(  \lambda\right)
\right\}  _{\lambda=\lambda_{j}}\nonumber\\
&  =\sum_{v=0}^{r}\frac{\beta_{r-v}^{+}}{\left(  r-v\right)  !}\frac{1}%
{v!}\left\{  \frac{d^{v}}{d\lambda^{v}}F_{n}^{-}\left(  \lambda\right)
\right\}  _{\lambda=\lambda_{j}},n\in\mathbb{Z}.\label{U-F}%
\end{align}
Define the difference expression $\ell_{\lambda}U^{\left(  n\right)  }$ by
\[
\ell_{\lambda}U^{\left(  n\right)  }\left(  \lambda_{j}\right)  =\Delta\left(
a_{n-1}\Delta U^{\left(  n-1\right)  }\right)  +\left(  q_{n}+2\lambda
_{j}p_{n}+\lambda_{j}^{2}\right)  U^{\left(  n\right)  },\,\,\,n\in\mathbb{Z}.
\]
We get by Eq. (\ref{1.7.0}) that
\begin{align*}
\ell_{\lambda}U^{\left(  0\right)  }\left(  \lambda_{j}\right)   &  =0,\\
\ell_{\lambda}U^{\left(  1\right)  }\left(  \lambda_{j}\right)  +\frac{1}%
{1!}\frac{d\ell_{\lambda}}{d\lambda}U^{\left(  0\right)  }\left(  \lambda
_{j}\right)   &  =0,\\
\ell_{\lambda}U^{\left(  r\right)  }\left(  \lambda_{j}\right)  +\frac{1}%
{1!}\frac{d\ell_{\lambda}}{d\lambda}U^{\left(  r-1\right)  }\left(
\lambda_{j}\right)  +\frac{1}{2!}\frac{d^{2}\ell_{\lambda}}{d\lambda^{2}%
}U^{\left(  r-2\right)  }\left(  \lambda_{j}\right)   &  =0
\end{align*}
for $r=2,3,...,m_{j-1}$, $j=1,2,...,k$. This shows that $U^{\left(  r\right)
}\left(  \lambda_{j}\right)  :=\left\{  U_{n}^{\left(  r\right)  }\left(
\lambda_{j}\right)  \right\}  _{n\in\mathbb{Z}}$ for $r=0,1,...,m_{j-1}$,
$j=1,2,...,s$ are principal vectors corresponding to the eigenvalues
$\lambda_{1},...,\lambda_{s}$ of the operator $L_{\lambda}$. The principal
vectors corresponding to the spectral singularities $\lambda_{s+1}%
,...,\lambda_{k}$ are found similarly.

Let us define the Hilbert spaces
\[
H_{p}\left(  \mathbb{Z}\right)  :=\left\{  y=\left\{  y_{n}\right\}
_{n\in\mathbb{Z}}:\sum_{n\in\mathbb{Z}}\left(  1+\left|  n\right|  \right)
^{2p}\left|  y_{n}\right|  ^{2}<\infty\right\}  ,
\]
\[
H_{-p}\left(  \mathbb{Z}\right)  :=\left\{  y=\left\{  y_{n}\right\}
_{n\in\mathbb{Z}}:\sum_{n\in\mathbb{Z}}\left(  1+\left|  n\right|  \right)
^{-2p}\left|  y_{n}\right|  ^{2}<\infty\right\}
\]
for $p=0,1,...$. Evidently $H_{0}\left(  \mathbb{Z}\right)  =\ell^{2}\left(
\mathbb{Z}\right)  $ and
\[
H_{p+1}\subsetneqq H_{p}\subsetneqq\ell^{2}\left(  \mathbb{Z}\right)
\subsetneqq H_{-p}\subsetneqq H_{-p-1}.
\]

Convergence properties of principal vectors of $L_{\lambda}$ are given in the
following theorem.

\begin{theorem}
\label{thm 16} We have

\begin{enumerate}
\item[i.] $U^{\left(  r\right)  }\left(  \lambda_{j}\right)  :=\left\{
U_{n}^{\left(  r\right)  }\left(  \lambda_{j}\right)  \right\}  _{n\in
\mathbb{Z}}\in\ell^{2}\left(  \mathbb{Z}\right)  $ for\thinspace
\thinspace$r=0,1,...,m_{j-1}$, $j=1,2,...,s,$

\item[ii.] $U^{\left(  r\right)  }\left(  \lambda_{j}\right)  :=\left\{
U_{n}^{\left(  r\right)  }\left(  \lambda_{j}\right)  \right\}  _{n\in
\mathbb{Z}}\notin\ell^{2}\left(  \mathbb{Z}\right)  $ for\thinspace
\thinspace$r=0,1,...,m_{j-1}$, $j=s+1,s+2,...,k,$

\item[iii.] $U^{\left(  r\right)  }\left(  \lambda_{j}\right)  :=\left\{
U_{n}^{\left(  r\right)  }\left(  \lambda_{j}\right)  \right\}  _{n\in
\mathbb{Z}}\in H_{-p_{0}+1}$ for $r=0,1,...,m_{j-1}$, $j=s+1,s+2,...,k$,
where
\[
p_{0}=\max\left\{  m_{1},m_{2},...,m_{s},m_{s+1},...,m_{k}\right\}  .
\]

\end{enumerate}
\end{theorem}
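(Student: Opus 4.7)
The plan is to exploit the two equivalent representations of $U_n^{(r)}(\lambda_j)$ in (\ref{U-F})---one through $r$-fold $\lambda$-derivatives of $F_n^+$ and the other through $\lambda$-derivatives of $F_n^-$. The first is tailored to controlling the behaviour at $n\to+\infty$, the second at $n\to-\infty$. In all three parts the machinery is the same: insert the Jost expansions (\ref{F1})--(\ref{F2}) and differentiate, noting that $dz/d\lambda = -1/\sin(z/2)$ is analytic near every $\lambda_j\notin\{-2,2\}$. Repeated $\lambda$-differentiation of $e^{\pm inz}$ produces polynomial-in-$n$ prefactors of degree at most $r$, while the kernel sum in (\ref{F1})--(\ref{F2}) and its derivatives are bounded uniformly by Lemma \ref{Jost 1}(i) under the hypothesis (\ref{p-q}).

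For (i), when $\lambda_j$ is an eigenvalue, $z_j\in P^+$ satisfies $\mathrm{Im}\,z_j>0$. The exponential factors $e^{inz_j}$ and $e^{-inz_j}$ then decay exponentially as $n\to+\infty$ and as $n\to-\infty$ respectively. Polynomial times exponential decay is square summable, so applying the $F^+$-representation for $n\geq 0$ and the $F^-$-representation for $n<0$ proves $U^{(r)}(\lambda_j)\in\ell^2(\mathbb{Z})$.

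For (ii), when $\lambda_j$ is a spectral singularity, $z_j\in P_0$ is real and the exponentials are purely oscillatory. The leading asymptotic becomes
\[
U_n^{(r)}(\lambda_j) = \tfrac{1}{r!}\,\alpha_n^{+}\bigl(in/\sin(z_j/2)\bigr)^{r}e^{inz_j}\bigl[1+o(1)\bigr],\quad n\to+\infty,
\]
with a nonvanishing leading coefficient since $z_j\notin\{0,\pi,2\pi\}$. In particular $|U_n^{(r)}(\lambda_j)|\asymp|n|^r$, which rules out $\ell^2$-membership.

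For (iii), the same expansion yields a uniform polynomial bound $|U_n^{(r)}(\lambda_j)|\leq C(1+|n|)^{m_j-1}\leq C(1+|n|)^{p_0-1}$ on $\mathbb{Z}$, valid through the $F^+$ branch at $+\infty$ and the $F^-$ branch at $-\infty$. Against the weight $(1+|n|)^{-2(p_0-1)}$ the weighted square sum $\sum_n(1+|n|)^{-2(p_0-1)}|U_n^{(r)}(\lambda_j)|^2$ becomes finite, placing $U^{(r)}(\lambda_j)$ in $H_{-p_0+1}$. The main technical hurdle is the careful tracking of the polynomial prefactors in the differentiated Jost expansions for (iii); this is a Leibniz-rule computation relying on the estimates of Lemma \ref{Jost 1}(i) for $K_{n,m}^\pm$ and their $\lambda$-derivatives. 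Parts (i) and (ii) then reduce to straightforward bookkeeping once the two representations in (\ref{U-F}) are in place.
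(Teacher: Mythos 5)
Your overall strategy is the one the paper intends: its own ``proof'' is only a pointer to the method of Ad\i var--Bairamov (Theorem 5.2 and Lemma 5.1 of \cite{adivar1}), namely use the $F^{+}$-representation of $U^{(r)}_n(\lambda_j)$ as $n\to+\infty$ and the $F^{-}$-representation from (\ref{U-F}) as $n\to-\infty$, with exponential decay of $e^{\pm i n z_j}$ when $\operatorname{Im}z_j>0$ and polynomial growth of the differentiated exponentials when $z_j$ is real. Parts (i) and (ii) of your argument are sound on that basis (for (ii) the case $r=0$ already fails $\ell^2$ because $|F^{+}_n(\lambda_j)|\to 1$). One secondary caveat: differentiating the kernel sums $\sum_m K^{\pm}_{n,m}e^{\pm imz/2}$ $r$ times at a \emph{real} point $z_j$ produces factors $|m|^r$, and condition (\ref{p-q}) alone only controls first moments; you need the standing hypothesis of Section 4 (the $\exp(\varepsilon|n|^{\delta})$ decay that makes $\lambda_1,\dots,\lambda_k$ finite in number) to justify boundedness of these differentiated sums for $r\geq 2$, so attributing it to (\ref{p-q}) is not quite enough.

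The genuine gap is the last step of (iii). From $|U^{(r)}_n(\lambda_j)|\leq C(1+|n|)^{p_0-1}$ you conclude that
\[
\sum_{n\in\mathbb{Z}}(1+|n|)^{-2(p_0-1)}\,|U^{(r)}_n(\lambda_j)|^2
\]
is finite, but with that bound the summand is only $O(1)$, and $\sum_{n\in\mathbb{Z}}1$ diverges; membership in $H_{-p}$ for a sequence growing like $(1+|n|)^{r}$ requires $2(p-r)>1$, i.e. $p\geq r+1$. Since the worst case is $r=m_j-1\leq p_0-1$, your estimate only yields $U^{(r)}(\lambda_j)\in H_{-p_0}$ (and a fortiori $H_{-(p_0+1)}$, which is the space appearing in the cited source), not $H_{-p_0+1}=H_{-(p_0-1)}$ as literally stated. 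So either you must produce a strictly better growth bound than $(1+|n|)^{m_j-1}$ --- which the Leibniz expansion does not give, since the top-order term genuinely contributes $|n|^{r}$ by your own part (ii) --- or you should flag that the index in the statement cannot be reached by this method and adjust the target space accordingly. As written, the verification of (iii) fails at the summation.
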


\begin{proof}
Use (\ref{U}-\ref{U-F}) and proceed by a method as in \cite[Theorem
5.2]{adivar1} and \cite[Lemma 5.1]{adivar1}.
\end{proof}

\begin{remark}
Employing the transformations in Subsections \ref{sc1}, \ref{sc2}, and
\ref{sc3} in the equalities (\ref{U}-\ref{U-F}), one may derive the principal
vectors of the operators $\Lambda$, $\Gamma$, and $L_{\lambda}^{q}$ easily.
Moreover, Theorem \ref{thm 16} enables us to see the convergence properties of
principal vectors of the operators $\Lambda$, $\Gamma$, and $L_{\lambda}^{q}$.
\end{remark}

\textbf{Open Problem:} The eigenfunction expansion has not been studied even
for the above mentioned particular cases. So, this may be the topic of further studies.

\begin{center}
\textbf{Acknowledgement}
\end{center}

The author would like to thank to anonymous referee for her/his valuable and
constructive comments that help improve this paper.


\begin{thebibliography}{99}                                                                                               %
\bibitem {adivar1}M. Ad\i var and E. Bairamov, Spectral properties of
non-selfadjoint difference operators, \emph{J. Math. Anal. Appl.} 261 (2001), 461-478.

\bibitem {adivar 2}M. Ad\i var and E. Bairamov, Difference equations of second
order with spectral singularities. \emph{J. Math. Anal. Appl.} 277 (2003), no.
2, 714--721.

\bibitem {adivar3}M. Ad\i var and M. Bohner, Spectral Analysis of
$q$-difference Equations with Spectral Singularities, \emph{Mathematical and
Computer Modelling} 43, 695-703, 2006.

\bibitem {adivar4}M. Ad\i var and M. Bohner, Spectrum and principal vectors of
second order $q$-difference equations. \emph{Indian J. Math.} 48 (2006), no.
1, 17--33.

\bibitem {3}E. Bairamov,
{\"{O}}. {\c{C}}akar%
, and A.M. Krall, Spectral properties, including spectral singularities, of a
quadratic pencil of Schr\"{o}dinger operators on the whole real axis.
\emph{Quaest. Math.} 26 (2003), no. 1, 15--30.

\bibitem {bairocky}E. Bairamov, Spectral properties of the non-homogeneous
Klein-Gordon $s$-wave equations. \emph{Rocky Mountain J. Math.} 34 (2004), no.
1, 1--11.

\bibitem {6}A. Degasperis, On the Inverse Problem for the Klein-Gordon s-wave
Equation, \emph{J.Math.Phys}. 11, (1970), 551-567.

\bibitem {dobrev}V. K. Dobrev; P. Truini, and L. C. Biedenharn, Representation
theory approach to the polynomial solutions of $q$-difference equations: \$U%
$\backslash$%
sb q(\{%
$\backslash$%
rm sl\}(3))\$ and beyond. \emph{J. Math. Phys.} 35 (1994), no. 11, 6058--6075.

\bibitem {7}E. P. Dolzhenko, Boundary Value Uniqueness Theorems for Analytic
Functions, \emph{Math.Notes}$.$ 25, No 6, (1979), 437-442.

\bibitem {greiner}W. Greiner, Relativistic Quantum Mechanics, Wave Equations,
Springer Verlag, 1994.

\bibitem {guseinov}G. Sh. Guseinov, The inverse problem of scattering theory
for a second order difference equation on the whole real line. (Russian) Dokl.
Akad. Nauk SSSR 230 (1976), no. 5, 1045--1048. \{English translation: Soviet
Math. Dokl. 17 (1976), no. 6, 1684-1688 (1977).\}

\bibitem {guseinov2}G. Sh. Guseinov, On the spectral analysis of a quadratic
pencil of Sturm-Liouville operators. (Russian) Dokl. Akad. Nauk SSSR 285
(1985), no. 6, 1292--1296.

\bibitem {10}M. Jaulent and C. Jean, The inverse $s$-wave scattering problem
for a class of potentials depending on energy, \emph{Comm. Math. Phys.} 28
(1972), 177-220.

\bibitem {1}A. M. Krall, E. Bairamov, and
{\"{O}}. {\c{C}}akar%
, Spectrum and spectral singularities of a quadratic pencil of a
Schr\"{o}dinger operator with a general boundary condition. J. Differential
Equations 151 (1999), no. 2, 252--267.

\bibitem {bai2}A. M. Krall, E. Bairamov,
{\"{O}}. {\c{C}}akar%
, Spectral analysis of non-selfadjoint discrete Schr\"{o}dinger operators with
spectral singularities. Math. Nachr. 231 (2001), 89--104.

\bibitem {27}F. G. Maksudov and G. Sh. Guseinov, On solution of the inverse
scattering problem for a Quadratic Pencil of one-dimensional Schr\"{o}dinger
operators on the whole axis, \emph{Sov. Math.Dokl.} 34 (1987), 34-38.

\bibitem {30}M. A. Naimark, Investigation of the spectrum and the expansion in
eigenfunctions of a non-selfadjoint operator of second order on a semi-axis,
AMS Translations, 2(16), (1960), 103-193.

\bibitem {sergeev}S. M. Sergeev, A quantization scheme for modular
$q$-difference equations. (Russian) Teoret. Mat. Fiz. 142 (2005), no. 3,
500--509; translation in \emph{Theoret. and Math. Phys}. 142 (2005), no. 3, 422--430.
\end{thebibliography}
\end{document}